\newcommand{\R}{\ensuremath{\mathbb{R}}}
\DeclareMathOperator{\sech}{\mathrm{sech}} 
\DeclareMathOperator{\area}{\mathrm{area}}
\def\labelitemi{--}
\def\ba #1\ea {\begin{align} #1\end{align}}
\def\bann #1\eann {\begin{align*} #1\end{align*}}
\def\ben #1\een {\begin{enumerate} #1\end{enumerate}}
\def\bi #1\ei {\begin{itemize}\renewcommand\labelitemi{--} #1\end{itemize}}
\theoremstyle{plain}
\newtheorem{thm}{Theorem}[section]
\newtheorem*{thm*}{Theorem}
\newtheorem{lem}[thm]{Lemma}
\newtheorem{cor}[thm]{Corollary}
\newtheorem{prop}[thm]{Proposition}
\theoremstyle{remark}
\newtheorem{rem}{Remark}
\newcommand{\pushright}[1]{\ifmeasuring@#1\else\omit\hfill$\displaystyle#1$\fi\ignorespaces}
\newcommand{\pushleft}[1]{\ifmeasuring@#1\else\omit$\displaystyle#1$\hfill\fi\ignorespaces}
\title[Ancient CSF in the disc with mixed boundary condition]{Ancient curve shortening flow in the disc with mixed boundary condition}
\date{\today}
\author{Mat Langford}\author{Yuxing Liu}\author{George McNamara}
\address{Mathematical Sciences Institute, Australian National University, Canberra, ACT, Australia}
\email{mathew.langford@anu.edu.au}
\email{yuxing.liu@anu.edu.au}
\email{george.mcnamara@anu.edu.au}
\begin{document}

\begin{abstract}
Given any non-central interior point $o$ of the unit disc $D$, the diameter $L$ through $o$ is the union of two linear arcs emanating from $o$ which meet $\partial D$ orthogonally, the shorter of them stable and the longer unstable (under these boundary conditions). In each of the two half discs bounded by $L$, we construct a convex eternal solution to curve shortening flow which fixes $o$ and meets $\partial D$ orthogonally, and evolves out of the unstable critical arc at $t=-\infty$ and into the stable one at $t=+\infty$. We then prove that these two (congruent) solutions are the only non-flat convex ancient solutions to the curve shortening flow satisfying the specified boundary conditions. We obtain analogous conclusions in the ``degenerate'' case $o\in\partial D$ as well, although in this case the solution contracts to the point $o$ at a finite time with asymptotic shape that of a half Grim Reaper, thus providing an interesting example for which an embedded flow develops a collapsing singularity.

\end{abstract}

\maketitle

\section{Introduction}

Variational problems subject to boundary constraints are ubiquitous in pure and applied mathematics and physics. One of the simplest such problems is to find and study paths of critical (e.g. minimal) length amongst those joining a given point $o$ in some domain $\Omega$ to its boundary $\partial\Omega$. When $\Omega$ is a Euclidean domain, such paths are, of course, straight linear arcs from $o$ to $\partial\Omega$ which meet $\partial\Omega$ orthogonally. 

While characterizing all such curves is a non-trivial problem in general (even for convex Euclidean domains, say), the ``Dirichlet--Neumann geodesics'' in the unit disc in $\R^2$ are easily found: when $o$ is the origin, they are the radii; when $o$ is not the origin, there are exactly two, and their union is the diameter through $o$.

One useful tool for analyzing such variational problems is the (formal) gradient flow (a.k.a. steepest descent flow), which in this case is the ``Dirichlet--Neuman curve shortening flow''; this equation evolves each point of a given curve with velocity equal to the curvature vector at that point, subject to holding one endpoint fixed at $o$ with the other constrained to $\partial\Omega$, which is met orthogonally. 

While curve shortening flow is now well-studied under other boundary conditions --- particularly the ``periodic'' (i.e. no-boundary) \cite{MR2967056,MR2843240,AndrewsBryan,BLTcsf,DHScsf,GageHamilton86,Gage84,Grayson,Huisken96}, ``Neumann--Neumann'' (a.k.a. free boundary) \cite{MR4668092,Buckland,Edelen,Huisken89,Ko,FBDistanceComparison,Stahl96b,Stahl96a} and ``Dirichlet--Dirichlet'' \cite{ALT,Huisken96} conditions --- we are aware of no literature considering the mixed ``Dirichlet--Neumann'' condition. 

Our main result (inspired by \cite{MR4668092}) is the following classification of the convex ancient solutions which arise in the simple setting of the unit disc.

\begin{thm}\label{thm:convex ancient solutions}
Given any $d\in(0,1]$, there exists a convex
, locally uniformly convex ancient solution $\{\Gamma_t^d\}_{t\in(-\infty,\omega_d)}$ to curve shortening flow in the unit disc $D$ with one endpoint fixed at $o:=(-d,0)$ and the other meeting $\partial D$ orthogonally. The timeslices $\Gamma^d_t$ each lie in the upper half-disc, and converge uniformly in the smooth topology as $t\to-\infty$ to the unstable critical arc $\{(x,0):x\in [-d,1]\}$; as a graph over the $x$-axis, 
\[
\mathrm{e}^{\lambda^2t}y(x,t)\to A\sinh(\lambda(x+d))\;\;\text{uniformly in $x$ as}\;\; t\to-\infty
\]
for some $A>0$, where $\lambda$ is the positive solution to $\tanh(\lambda(1+d))=\lambda$.

When $d<1$, $\omega_d=+\infty$ and the timeslices converge uniformly in the smooth topology as $t\to+\infty$ to the minimizing arc $\{(x,0):x\in [-1,-d]\}$. When $d=1$, $\omega_d<\infty$ and the timeslices contract uniformly as $t\to \omega_d$ to the point $o$ and, after performing a standard type-II blow-up, converge locally uniformly in the smooth topology to the right half of the downward translating Grim Reaper. 

Modulo time translations and reflection about the $x$-axis, $\{\Gamma_t^d\}_{t\in(-\infty,\omega_d)}$ is the only non-flat convex ancient curve shortening flow subject to the same boundary conditions.
\end{thm}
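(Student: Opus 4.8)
\emph{Overall strategy.} The plan is to obtain existence from an unstable-manifold construction at the longer critical arc, to read off the forward behaviour from the monotonicity of length (resp.\ enclosed area when $d=1$) together with a blow-up analysis, and to prove uniqueness by showing that every non-flat convex ancient solution is trapped on the (one-dimensional) unstable manifold of that arc; this follows the template of the free-boundary classification in \cite{MR4668092}. First I would record the structural facts that underlie everything: along the flow, convexity and strict convexity are preserved by a maximum-principle argument (the orthogonality condition at $\partial D$ becomes the oblique condition $\partial_s\kappa=\kappa$, the coefficient being the curvature $1$ of $\partial D$, and $\kappa\ge 0$ at the Dirichlet endpoint $o$ follows from the graph representation and the boundary conditions), and, using the two critical arcs as static barriers together with convexity, the timeslices are graphs lying in one closed half-disc. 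Linearising the flow at the longer arc $y\equiv 0$ gives $\partial_t y = y_{xx}$ on $[-d,1]$ with $y(-d)=0$ and $y_x(1)=y(1)$; its spectrum consists of a single positive eigenvalue $\lambda^2$ with eigenfunction $\sinh(\lambda(x+d))$, where $\tanh(\lambda(1+d))=\lambda$, followed by a decreasing sequence of negative eigenvalues, and $0$ is not an eigenvalue for any $d\in(0,1]$. Thus the longer arc is a hyperbolic stationary solution of Morse index $1$, while the shorter arc is stable.

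\emph{Existence and backward asymptotics.} Applying the unstable-manifold theorem to the (well-posed) flow near the longer arc produces a non-flat ancient solution converging to that arc as $t\to-\infty$; equivalently, one flows forward from $\varepsilon_n\sinh(\lambda(x+d))$, corrected near the endpoints to satisfy the boundary conditions, starting at $t=-n$, normalises the enclosed area at $t=0$, and extracts a subsequential limit using interior and boundary parabolic estimates for this mixed boundary value problem. The one-dimensional unstable manifold has two branches, interchanged by the reflection $y\mapsto -y$; taking the $y>0$ branch and propagating convexity forward yields $\{\Gamma_t^d\}$. Since the orbit lies on the unstable manifold of a hyperbolic fixed point, convergence to the longer arc occurs at the sharp exponential rate governed by $\lambda^2$, which gives the stated asymptotic expansion.

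\emph{Forward behaviour.} Length is non-increasing along the flow. For $d<1$ one has $\operatorname{length}(\Gamma_t^d)\ge\operatorname{dist}(o,\partial D)=1-d>0$, so no finite-time singularity can occur (a singularity of a convex curve in a compact region would be a contraction to a point), hence $\omega_d=+\infty$; the forward limit is a stationary solution, and as the length remains strictly below its backward value $1+d$ it must be the shorter arc, to which the flow converges smoothly by parabolic estimates. For $d=1$ the shorter arc has degenerated to the single point $o\in\partial D$; here I would show the enclosed area decreases at a rate bounded below so long as the curve is non-degenerate (Gauss--Bonnet accounting of the total turning, using the boundary conditions), forcing the area to vanish at some finite $\omega_1$, so the flow contracts to a point, which convexity and the boundary conditions identify with $o$. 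A type-II blow-up centred at points approaching $o$ converges to an eternal convex solution with an orthogonal free boundary on the tangent line of $\partial D$ at $o$; doubling across this line and invoking Hamilton's Harnack estimate identifies the limit as a translator, hence the Grim Reaper, so the original blow-up is the asserted half Grim Reaper.

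\emph{Uniqueness, and the main obstacle.} Let $\{\Gamma_t\}$ be any non-flat convex ancient solution with the prescribed boundary data. By the barrier and graphicality facts above, its timeslices are graphs in one closed half-disc, say with $y>0$; monotonicity of length and parabolic regularity produce a backward limit which is a stationary solution, hence one of the two arcs. It cannot be the shorter one: near that arc the flow is a strict contraction, so the orbit would lie on its trivial unstable manifold and be flat. Hence the backward limit is the longer arc, and since that arc is hyperbolic the orbit lies on its one-dimensional unstable manifold; within the convex $y>0$ class that manifold is a single orbit, namely $\{\Gamma_t^d\}$ up to time translation. Restoring the reflection symmetry, every non-flat convex ancient solution is a time translate of $\{\Gamma_t^d\}$ or of its reflection across the $x$-axis. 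I expect the hardest part to be this uniqueness step --- specifically, establishing graphicality and confinement to a half-disc and the existence of the backward limit for an a priori arbitrary non-flat convex ancient solution, and then upgrading ``converges to the longer arc as $t\to-\infty$'' to ``lies on its unstable manifold, hence is a time translate of $\Gamma_t^d$''. The finite-time collapse and half-Grim-Reaper identification when $d=1$ is a second, largely independent, technical hurdle.
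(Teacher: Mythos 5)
You take a genuinely different route from the paper. The paper constructs the ancient solution as a limit of ``very old'' (but not ancient) solutions started from hairclip timeslices $\mathrm{H}^{\lambda_\rho}_{t_\rho}\cap D$, and does all the compactness work with explicit circular-arc barriers (Proposition~\ref{prop:barriers}) and a curvature-monotonicity estimate preserved by the flow; the sharp backward rate $\mathrm{e}^{\lambda_0^2 t}$ then comes from direct comparison with the hairclip family (Lemmas~\ref{lem:sharp speed lower bound}, \ref{lem:curvature is integrable}, Corollary~\ref{cor:sharp speed upper bound}) rather than from spectral theory. For uniqueness the paper first shows, via the enclosed-area first-variation identity, that any non-flat convex ancient solution converges backward to the longer arc, then establishes the \emph{rate} of convergence by avoidance-principle comparison against the constructed solution, then runs a Fourier/Alaoglu argument on the Dirichlet--Robin heat equation for $y$, and closes with a final avoidance argument. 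Your proposal replaces all of this with the unstable-manifold theorem at the longer arc, plus length/area monotonicity. That is conceptually cleaner but not free: here the Neumann endpoint moves on $\partial D$, so the graphical domain is time-dependent and the linearisation you write down is only formal until one fixes the domain (or passes to intrinsic coordinates) and checks that the resulting fully nonlinear problem fits the Henry/Lunardi invariant-manifold framework.

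The substantive gap is the step you yourself flag: ``since that arc is hyperbolic the orbit lies on its one-dimensional unstable manifold.'' In an infinite-dimensional parabolic setting this does not follow from hyperbolicity and backward convergence alone; a priori the ancient solution could approach the arc sub-exponentially and hence not enter the local unstable manifold chart at all. Some quantitative decay input is required before the spectral decomposition can be invoked --- either a Merle--Zaag/{\L}ojasiewicz-type estimate, or, as the paper does, an avoidance-principle comparison against the already-constructed solution that yields $\overline y(t)\le C\mathrm{e}^{\lambda_0^2 t}$ (handled separately for $d<1$, via the gradient bound of Proposition~2.6, and for $d=1$, via a first-order contact analysis with the circular arcs $C_\theta$). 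Without an argument of this type, your uniqueness proof is not complete. The same issue also infects your claim that a backward limit equal to the \emph{shorter} arc forces flatness: ruling out sub-exponential escape from a stable fixed point is the same kind of question. Your forward-behaviour sketch (length monotonicity for $d<1$; area drop, type-II blow-up, doubling and Harnack for $d=1$) is consistent with the paper's, though the paper rules out a type-I limit more directly by observing that a shrinking semicircle would violate the Dirichlet condition.
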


En route to proving Theorem \ref{thm:convex ancient solutions}, we establish the following convergence result (cf. \cite{ALT,GageHamilton86,Gage84,Grayson,FBDistanceComparison}), which is of independent interest (see the proof of Lemma \ref{lem:old solutions}).

\begin{thm}\label{thm:convergence}
Let $\Gamma$ be an oriented smooth convex arc in the upper unit half-disc $D_+$ with left endpoint $o=(-d,0)$, $d\in(0,1]$, where its curvature vanishes, and right endpoint on $\partial D$, which is met orthogonally. Suppose that the curvature of $\Gamma$ increases monotonically with arclength from $o$. If $d<1$, then the Dirichlet--Neumann curve shortening flow starting from $\Gamma$ exists for all positive time $t$ and converges uniformly in the smooth topology as $t\to\infty$ to the minimizing arc joining $o$ to $\partial D$. If $d=1$, then the Dirichlet--Neumann curve shortening flow starting from $\Gamma$ converges uniformly to the point $o$ as $t\to \omega<\infty$ and, after performing a standard type-II blow-up, converges locally uniformly in the smooth topology to a half Grim Reaper.
\end{thm}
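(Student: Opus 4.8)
The plan is to run the Dirichlet--Neumann flow from $\Gamma$ and track three structural properties — graphicality over a line, strict convexity, and monotonicity of the curvature along the curve from $o$ — all of which I claim are preserved, and then to use these to control the flow until it either exists for all time (if $d<1$) or collapses in finite time (if $d=1$). Throughout, the curvature evolves by $\partial_t\kappa=\partial_s^2\kappa+\kappa^3$, with $\kappa=0$ at the fixed endpoint $o$ (indeed, writing $\Gamma_t$ locally as a graph $y(\cdot,t)$ with $y(-d,t)\equiv0$, the Dirichlet condition forces $y_{xx}(-d,t)\equiv0$) and, at the free endpoint, the reflecting condition $\partial_s\kappa=\kappa_{\partial D}\,\kappa=\kappa$ coming from orthogonal intersection with the unit circle.

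\textbf{Step 1 (short-time existence and preserved structure).} By the short-time existence theory for the mixed boundary value problem, $\Gamma$ generates a unique smooth flow $\{\Gamma_t\}_{t\in[0,\omega)}$ on a maximal interval, meeting $\partial D$ orthogonally and fixing $o$. Since $\kappa\ge0$ initially and $\kappa=0$ at $o$, the strong maximum principle and a Hopf-type argument at the two endpoints give $\kappa>0$ in the interior for $t>0$, so local uniform convexity is preserved; combined with the endpoint constraints this keeps each $\Gamma_t$ a graph over a line and inside $\overline{D_+}$. For the key monotonicity hypothesis, $u:=\partial_s\kappa$ satisfies a linear parabolic equation; at $o$ one has $u\ge0$ automatically (a one-sided derivative of the nonnegative function $\kappa$ at a zero of $\kappa$), and at the free endpoint $u=\kappa\ge0$ by the reflecting condition, so the maximum principle propagates $u\ge0$. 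Making this last step fully rigorous against the \emph{curved, moving} free boundary is the first delicate point; I would model the argument on the free-boundary curvature estimates of \cite{Stahl96a,FBDistanceComparison} and the program of \cite{MR4668092}.

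\textbf{Step 2 (the dichotomy).} Differentiating length gives $\tfrac{d}{dt}L(t)=-\int_{\Gamma_t}\kappa^2\,ds\le0$ (the endpoint boundary terms vanish: one endpoint is fixed, and orthogonality kills the other), so $L$ decreases. If $d<1$, then $o\in\Gamma_t$ is interior while the free endpoint stays on $\partial D$, so $\Gamma_t$ cannot contract to a point; since a convex curve shortening flow becomes singular only by shrinking to a round point (\cite{GageHamilton86,Grayson}, in the free-boundary setting \cite{Stahl96a,Stahl96b}), it follows that $\omega=\infty$, and a priori estimates in the spirit of \cite{ALT,FBDistanceComparison} — using graphicality and monotone curvature — give a uniform curvature bound, so the flow subconverges smoothly as $t\to\infty$ to a critical arc, i.e.\ to one of the two Dirichlet--Neumann geodesics. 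If $d=1$ the free endpoint must approach $o\in\partial D$ as the flow degenerates, and the only non-degenerate critical arc is the unstable diameter; ruling out convergence to it (Step 3) then forces a finite-time singularity.

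\textbf{Step 3 (identifying the limit).} Linearizing the flow at a straight critical arc of length $\ell$, the free-boundary condition becomes the Robin condition $y_x=y$ at the moving end, whose eigenvalue problem is $\lambda=\tanh(\lambda\ell)$. For $\ell=1+d>1$ (the long arc) this has a positive root $\lambda$ with everywhere-positive eigenfunction $\sinh(\lambda(x+d))$; since the convex initial curve has strictly positive overlap with this unstable mode, the flow moves monotonically away from the long arc and cannot return to it. Hence for $d<1$ the subsequential limit is the short arc $\ell=1-d<1$, for which $\lambda=\tanh(\lambda\ell)$ has no positive root and the linearization is strictly stable; the spectral gap upgrades subconvergence to full exponential convergence in the smooth topology, completing the $d<1$ case. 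For $d=1$ the same instability argument shows the flow leaves every neighbourhood of the diameter, hence by Step 2 it contracts in finite time, and by convexity to a point, necessarily the fixed endpoint $o\in\partial D$. A type-I rescaling at $o$ would yield a non-trivial convex self-shrinker with one endpoint fixed at its own shrinking centre, which is impossible; so the singularity is type II. Hamilton's point selection — with the maxima located at the free endpoint by monotone curvature, so the selected points lie on $\partial D$ and converge to $o$ — produces an eternal, convex limit flow attaining its maximal curvature and meeting the blown-up tangent line to $\partial D$ at $o$ orthogonally, which Hamilton's rigidity theorem identifies as the right half of the downward-translating Grim Reaper.

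\textbf{Main obstacle.} I expect the crux to be two-fold: (i) the rigorous preservation of $\partial_s\kappa\ge0$ at the curved, moving free boundary in Step 1, on which the whole graphical structure rests; and (ii) the $d=1$ blow-up analysis — excluding type-I behaviour despite the fixed endpoint, and verifying that the tangent line to $\partial D$ genuinely survives in the rescaled limit so that the model is a \emph{half} Grim Reaper rather than an entire one.
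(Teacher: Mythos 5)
Your general outline — short-time existence, preservation of convexity and of $\kappa_s\ge 0$, a dichotomy, and type-II point picking for $d=1$ — parallels the paper's proof (given inside Lemma \ref{lem:old solutions}), and the $\kappa_s$ preservation via the maximum principle and the type-I exclusion are close to what the paper does. But there are two genuine gaps and one structural divergence worth flagging.

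First, Step 2's claim that ``convex curve shortening flow becomes singular only by shrinking to a round point'' is being invoked for a problem where that classification has not been established. Gage--Hamilton and Grayson treat closed curves; Stahl treats the Neumann--Neumann problem. You cannot simply import these to the mixed Dirichlet--Neumann flow to conclude $\omega=\infty$ when $d<1$: a priori the curvature could blow up at the Dirichlet corner $o$ or at the moving free endpoint while the total length stays bounded. What the paper uses instead is the quantity $\overline\theta$, the turning angle at the Neumann endpoint, which satisfies $\overline\theta_t=\overline\kappa>0$ and $\overline\theta<\pi$. Coupled with the circular-barrier gradient estimate \eqref{eq:gradient lower bound} (the arc $C_{\overline\theta}$ lies \emph{below} $\Gamma_t$, giving a lower bound on $\underline\theta$), this yields uniform graphicality on any time interval where $\overline\theta\le\theta_0<\pi$, from which parabolic regularity gives a smooth limit at $T$ and hence continuation — contradicting maximality of $T$. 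So $\overline\theta\to\pi$, which forces convergence to the minimizing arc when $d<1$ (and contraction to $o$ when $d=1$). Your length-monotonicity observation is true but too weak to carry this; the $\overline\theta$ monotonicity is the missing structural ingredient.

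Second, Step 3's linearization and spectral-gap argument is unnecessary for this theorem and, as written, is heuristic: ``positive overlap with the unstable mode implies the flow moves monotonically away from the long arc'' is a linearized statement that requires a genuine dynamical-systems argument (center-manifold or Merle--Zaag type) to make rigorous for the nonlinear flow, and the upgrade from subconvergence to full smooth convergence via the spectral gap of the short arc would need the uniform curvature bound you have not established. The paper sidesteps all of this: since $\overline\theta_t>0$ and the initial arc is non-flat, $\overline\theta(0)>0$, so the flow is already strictly above the long arc and $\overline\theta$ increases monotonically to $\pi$; no spectral analysis is needed to identify the limit. (The linearization you describe does appear in the paper, but only for the asymptotic rate as $t\to-\infty$ of the \emph{ancient} solution, which is a different statement.)

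Minor points: for short-time existence the paper uses an odd-doubling of $\Gamma$ about $o$ to reduce to Stahl's Neumann--Neumann theory; and for the $\kappa_s\ge 0$ preservation, the Hopf lemma at $o$ uses the second-order consequence $\kappa_{ss}=\Delta\kappa=\kappa_t-\kappa^3=0$ at $o$ (since $\kappa(o,t)\equiv 0$), i.e.\ a Neumann condition $(\kappa_s)_s=0$ at $o$, not merely the sign of a one-sided derivative — though for the weak inequality $\kappa_s\ge 0$ your reasoning suffices. Your Step 3 treatment of the $d=1$ blow-up (ruling out type-I via the fixed Dirichlet endpoint, then Hamilton point selection at the free endpoint) essentially matches the paper.
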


Though the curvature monotonicity hypothesis appears unnaturally restrictive in Theorem \ref{thm:convergence}, we note that some such additional condition is required to prevent the development of self-intersections at the Dirichlet endpoint (resulting in subsequent cusplike singularities). Moreover, as Theorem \ref{thm:convergence} demonstrates in case the Dirichlet endpoint lies on the boundary, collapsing singularities may form at the Dirichlet endpoint even when the flow remains embedded. It is not hard to see that this can also occur when the Dirichlet endpoint lies to the interior (as a limiting case of the flow forming a cusp singularity just after losing embeddedness, say).

\subsection*{Acknowledgements} This work originated as an undergraduate summer research project undertaken at the ANU by YL and GM under the supervision of ML. We are grateful to the MSI for funding the project. ML thanks Julie Clutterbuck for bringing the mixed boundary value problem to his attention.

\section{Preliminaries}

Fix a point $o=(-d,0)\in D$ in the unit disc $D\subset \R^2$, with $d\in(0,1]$. Denote by $C_\theta\subset D$ the circular arc which passes through $o$ and meets the boundary of $D$ orthogonally at $(\sin\theta,\cos\theta)$; that is,
\[
C_\theta:=\{(x,y)\in D:(x-\xi)^2+(y-\eta)^2=r^2\}\,,
\]
where, defining $a:=\frac{1}{2}(d^{-1}+d)$,
\[
(\xi,\eta):=(\cos\theta,\sin\theta)+r(-\sin\theta,\cos\theta)\;\; \text{
and}\;\;
r:=\frac{1+d^2+2d\cos\theta}{2d\sin\theta}=\frac{a+\cos\theta}{\sin\theta}\,.
\]

Consider also the circular arc $\check C_{\theta}\subset D$ which is symmetric about the $y$-axis and meets $\partial D$ orthogonally at $(\cos\theta,\sin\theta)$. That is,
\[
\check C_\theta:=\{x^2+(y-\check\eta)^2=\check r^2\}\,,
\]
where
\[
\check\eta:=\csc\theta\;\;\text{and}\;\;\check r:=\cot\theta\,.
\]

\begin{prop}\label{prop:barriers}
The family $\{\check C_{\theta^+(t)}\}_{t\in(-\infty,0)}$, where $\theta^+(t):=\arcsin\mathrm{e}^{2t}$, is a supersolution to curve shortening flow. The family $\{C_{\theta^-(t)}\}_{t\in(-\infty,\omega_d)}$, where $\theta^-$ is the solution to
\begin{equation}\label{eq:characteristic ODE}
\left\{\begin{aligned}\frac{d\theta}{dt}={}&\frac{\sin\theta}{a+\cos\theta}\\ \theta(0)={}&\tfrac{\pi}{2}\,,\end{aligned}\right.
\end{equation}
is a subsolution to curve shortening flow.
\end{prop}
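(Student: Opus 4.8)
The plan is to verify directly that each of the two stated families of circular arcs satisfies the appropriate differential inequality for being a super- or subsolution to curve shortening flow, together with the required boundary conditions (orthogonality at $\partial D$ and, for the subsolution family, passing through $o$). Recall that a family of curves $\{\Sigma_t\}$ given as level sets $\{F(\cdot,t)=0\}$ is a supersolution (resp. subsolution) to CSF if at each point $\partial_t F \ge \kappa|\nabla F|$ (resp. $\le$), where $\kappa$ is the curvature of the level set with respect to the chosen orientation; equivalently, one compares the normal speed of the moving curve with its curvature. Since every member of both families is a circular arc, its curvature is constant along the arc (equal to the reciprocal of the radius), so the whole computation reduces to: (i) compute the radius $r(t)$ (resp. $\check r(t)$) and the signed normal velocity of the family as functions of $t$; (ii) check the sign of normal velocity minus curvature.

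For the family $\{\check C_{\theta^+(t)}\}$, the arcs are symmetric about the $y$-axis with center $(0,\check\eta)=(0,\csc\theta)$ and radius $\check r=\cot\theta$. The natural approach is to track the point where $\check C_\theta$ crosses the $y$-axis, namely the bottom point $(0,\check\eta-\check r)=(0,\csc\theta-\cot\theta)=(0,\tan(\theta/2))$; by symmetry the curve is horizontal there, so the normal velocity at that point is just the time derivative of its height, $\frac{d}{dt}\tan(\theta^+/2)$. With $\theta^+(t)=\arcsin e^{2t}$ one computes $\sin\theta^+ = e^{2t}$, so $\frac{d\theta^+}{dt} = \frac{2e^{2t}}{\cos\theta^+} = 2\tan\theta^+$, and then $\frac{d}{dt}\tan(\theta^+/2) = \tfrac12\sec^2(\theta^+/2)\cdot 2\tan\theta^+$. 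Comparing this against the curvature $\kappa = 1/\check r = \tan\theta^+$ (pointing downward, i.e. into the region the curve bounds below) should yield exactly the supersolution inequality; the half-angle identities $\sec^2(\theta/2) = 2/(1+\cos\theta)$ and $\tan\theta = \sin\theta/\cos\theta$ reduce everything to an elementary inequality in $\theta^+\in(0,\pi/2)$, and one checks it holds (with equality only in a limit). One should also note $\theta^+(t)\to 0$ as $t\to-\infty$, so the family sweeps out from the degenerate arc; and orthogonality at $\partial D$ holds by construction of $\check C_\theta$.

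For the family $\{C_{\theta^-(t)}\}$, the arcs all pass through the fixed point $o=(-d,0)$ and meet $\partial D$ orthogonally, with radius $r = \frac{a+\cos\theta}{\sin\theta}$ where $a = \tfrac12(d^{-1}+d)$. Here the cleanest approach is to exploit that $o$ is a fixed point on every curve: parametrize the curve near $o$ and compute the normal speed there, or — more robustly — use the level-set formulation with $F(x,y,t)$ the signed distance (or any defining function) to $C_{\theta^-(t)}$ and evaluate $\partial_t F/|\nabla F|$ at a convenient point such as $o$ or the $\partial D$-endpoint. Since the ODE \eqref{eq:characteristic ODE} is precisely $\dot\theta = \sin\theta/(a+\cos\theta) = 1/r$, one expects the normal velocity of the family to match a quantity involving $1/r$, and the subsolution inequality (normal speed $\le$ curvature $= 1/r$, with the inward orientation) should come out of the geometry of how the center $(\xi,\eta)$ and radius move. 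I would carry this out by differentiating the defining relation $(x-\xi(t))^2 + (y-\eta(t))^2 = r(t)^2$ in $t$ to get the normal speed $V = \frac{(x-\xi)\dot\xi + (y-\eta)\dot\eta + r\dot r}{r}$ on the curve, then bound $V$ against $1/r$ uniformly along the arc, using the explicit formulas for $\xi,\eta,r$ in terms of $\theta$ and the ODE for $\theta^-$. Finally, I would record the endpoint behavior: $\theta^-(0)=\pi/2$ gives the initial arc, $\theta^-(t)\to 0$ as $t\to-\infty$ recovers the unstable critical arc $\{(x,0):x\in[-d,1]\}$, and the maximal existence time $\omega_d$ is determined by when $\theta^-$ reaches the value at which $C_\theta$ degenerates to the stable arc (which is finite iff $d=1$).

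The main obstacle is the second family: unlike the symmetric family, $\{C_{\theta^-(t)}\}$ has no point of horizontal tangency at a fixed location, so computing the normal velocity requires the full level-set differentiation above, and — crucially — establishing the subsolution inequality \emph{at every point of the arc simultaneously}, not just at one convenient point, since the arcs are not all congruent and the worst point could vary. I expect that after substituting the ODE $\dot\theta = \sin\theta/(a+\cos\theta)$ the inequality $V \le 1/r$ will reduce to something that holds with equality identically (so that $\{C_{\theta^-(t)}\}$ is in fact an exact solution away from the endpoints, which would explain why it serves as the model for the actual ancient flow in Theorem \ref{thm:convex ancient solutions}); if instead it is a strict inequality, one must identify the sign carefully, keeping track of the orientation convention so that ``subsolution'' means the barrier lies below the flow. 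A secondary, more routine point is checking the boundary conditions are compatible with the definitions of super/subsolution used elsewhere in the paper (i.e. that the moving endpoint on $\partial D$ does not violate the comparison), but this follows from orthogonality together with a Hopf-lemma-type argument at the boundary.
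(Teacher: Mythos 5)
Your plan for the second family is the paper's plan: differentiate the defining relation $(x-\xi)^2+(y-\eta)^2=r^2$ in $t$ to obtain the normal speed, then compare against $1/r$ using the ODE for $\theta^-$. You also correctly identify the main obstacle, namely that the inequality must hold at \emph{every} point of the arc simultaneously. But you stop exactly there, and the resolution you conjecture is wrong: the family $\{C_{\theta^-(t)}\}$ is \emph{not} an exact solution, and the inequality does not reduce to an identity. The missing ingredient is the algebraic simplification
\[
\frac{1}{r}(x-\xi,y-\eta)\cdot(\xi_\theta,\eta_\theta)+r_\theta \;=\; -\frac{y}{\sin\theta}\,,
\]
which is what actually collapses the computation. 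Once this is in hand, the normal speed becomes $-\gamma_t\cdot\nu=\frac{y}{\sin\theta}\,\theta_t$, and substituting $\theta_t=\sin\theta/(a+\cos\theta)=1/r=\kappa$ gives $-\gamma_t\cdot\nu=\frac{y}{\sin\theta}\,\kappa\le\kappa$, because every point of $C_\theta$ lies at height $y\le\sin\theta$. So the subsolution inequality is strict except at the $\partial D$-endpoint, where it is an equality, and degenerates to an equality of zeros at the fixed Dirichlet endpoint $o$. Without identifying this identity, your step ``bound $V$ against $1/r$ uniformly along the arc'' does not close, and the alternative you lean toward (exact solution) is false.

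A secondary issue: for the first family, verifying the inequality only at the apex $(0,\tan(\theta^+/2))$ is not enough, since the normal speed of $\check C_{\theta^+(t)}$ varies along the arc (the center and radius both change). The paper avoids redoing this computation entirely by citing \cite[Proposition 2.1]{MR4668092}; if you do want a self-contained proof you need to run the same level-set differentiation as for the second family (there the analogous simplification yields $-\gamma_t\cdot\nu=\frac{y}{\sin\theta}\,\theta_t^+$ again, and then $\theta_t^+=2\tan\theta^+\ge\tan\theta^+=\kappa$ gives the supersolution inequality).
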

\begin{rem}
Separating variables, the problem \eqref{eq:characteristic ODE} becomes
\[
\int_t^0dt=\int_{\theta}^{\frac{\pi}{2}}\frac{a+\cos\omega}{\sin\omega}d\omega
=\int_{\omega=\theta}^{\frac{\pi}{2}}d\log\left(2\sin^{1+a}\left(\tfrac{\omega}{2}\right)\cos^{1-a}\left(\tfrac{\omega}{2}\right)\right)\,,
\]
and hence
\[
\mathrm{e}^{t}=2\sin^{1+a}\left(\tfrac{\theta^-(t)}{2}\right)\cos^{1-a}\left(\tfrac{\theta^-(t)}{2}\right)\,.
\]
In particular, for all $d\in(0,1]$, the solution certainly exists for all $t<0$, with $\theta^-(t)\sim 2^{\frac{a}{1+a}}\mathrm{e}^{\frac{t}{a+1}}$ as $t\to-\infty$. When $d\in(0,1)$, the solution exists up to time $\omega_d=+\infty$, and $\lim_{t\to+\infty}\theta^-(t)=\pi$. When $d=1$, the solution exists up to time $\omega_d=\log 2$, and $\lim_{t\to\omega_d}\theta^-(t)=\pi$.
\end{rem}
\begin{proof}[Proof of Proposition \ref{prop:barriers}]
The first claim is proved in \cite[Proposition 2.1]{MR4668092}.

To prove the second claim, consider any monotone increasing function $\theta$ of $t$, and let $\gamma(u,t)=(x(u,t),y(u,t))$ be a general parametrization of $C_{\theta(t)}$. Differentiation of the equation
\[
(x-\xi)^2+(y-\eta)^2=r^2
\]
with respect to $t$ along $\gamma$ and $\theta$ yields
\[
(x-\xi)(x_t-\xi_\theta\theta_t)+(y-\eta)(y_t-\eta_\theta\theta_t)=rr_\theta\theta_t\,.
\]
Since the outward unit normal to $C_\theta$ at $(x,y)$ is $\nu=\frac{1}{r}(x-\xi,y-\eta)$, this becomes
\[
-\gamma_t\cdot\nu=-\left(\frac{x-\xi}{r}\xi_\theta+\frac{y-\eta}{r}\eta_\theta+rr_\theta\right)\theta_t\,.
\]
We claim that
\[
\frac{1}{r}(x-\xi,y-\eta)\cdot(\xi_\theta,\eta_\theta)+r_\theta=-\frac{y}{\sin\theta}\,.
\]
Indeed,
\begin{align*}
\frac{1}{r}({}&x-\xi,y-\eta)\cdot(\xi_\theta,\eta_\theta)\\
={}&\frac{1}{r}\Big((x,y)-(\cos\theta,\sin\theta)-r(-\sin\theta,\cos\theta))\Big)\cdot\Big((1+r_\theta)(-\sin\theta,\cos\theta)-r(\cos\theta,\sin\theta)\Big)\\
={}&-\left((x,y)-(\cos\theta,\sin\theta)-r(-\sin\theta,\cos\theta))\right)\cdot\left(\cot\theta(-\sin\theta,\cos\theta)+(\cos\theta,\sin\theta)\right)\\
={}&-(x,y)\cdot\left(\cot\theta(-\sin\theta,\cos\theta)+(\cos\theta,\sin\theta)\right)+1+r\cot\theta\\
={}&-(x,y)\cdot(0,\csc\theta)-r_\theta\,,
\end{align*}
from which the claim follows. 

Since, $y\le \sin\theta$ along $C_\theta$
, taking $\theta$ to be the solution to the specified initial value problem yields
\[
-\gamma_t\cdot\nu=\frac{y}{\sin\theta}\theta_t=\frac{y}{\sin\theta}\frac{1}{r}\le \frac{1}{r}=\kappa\,,
\]
as claimed.
\end{proof}

Next consider $\{\mathrm{H}_t\}_{t\in(-\infty,\infty)}$, the fundamental domain of the horizontally oriented hairclip solution to curve shortening flow centred at $o$; that is,
\[
\mathrm{H}_t:=\{(x,y)\in [0,\infty)\times[0,\tfrac{\pi}{2}]:\sin(y)=\mathrm{e}^{t}\sinh(x+d)\}\,.
\]
Given any $\lambda>0$, define $\{\mathrm{H}^\lambda_t\}_{t\in(-\infty,\infty)}$ by parabolically rescaling the hairclip by $\lambda$. That is,
\[
\mathrm{H}^\lambda_t:=\lambda^{-1}\mathrm{H}_{\lambda^{2}t}=\{(x,y)\in [0,\infty)\times[0,\tfrac{\pi}{2\lambda}]:\sin(\lambda y)=\mathrm{e}^{\lambda^2t}\sinh(\lambda(x+d))\}\,.
\]
Observe that $\{\mathrm{H}^\lambda_t\}_{t\in(-\infty,\infty)}$ satisfies
\[
\frac{\kappa}{\cos\theta}=\lambda\tan(\lambda y)\;\;\text{and}\;\;\frac{\kappa}{\sin\theta}=\lambda\tanh(\lambda(x+d))\,,
\]
where $\theta\in[0,\frac{\pi}{2}]$ is the angle the tangent vector makes with the $x$-axis. From this we see, in particular, that $\kappa$ is positive and monotone increasing with respect to arclength from $o$.

\begin{prop}
For each $\theta\in(0,\frac{\pi}{2})$ there exists a unique pair $(\lambda,t)$ such that $\mathrm{H}^\lambda_t$ intersects $\partial D$ orthogonally at $(\cos\theta,\sin\theta)$.
\end{prop}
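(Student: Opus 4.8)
The plan is to reduce the two-parameter problem to a single transcendental equation in $\lambda$ alone and analyze that equation by elementary calculus. Write $\varphi\in(0,\tfrac{\pi}{2})$ for the tangent angle along $\mathrm{H}^\lambda_t$, to distinguish it from the polar angle $\theta$ of the prescribed boundary point $P:=(\cos\theta,\sin\theta)$. Dividing the two identities $\kappa/\cos\varphi=\lambda\tan(\lambda y)$ and $\kappa/\sin\varphi=\lambda\tanh(\lambda(x+d))$ recorded above gives, at any point of $\mathrm{H}^\lambda_t$ with $0<\lambda y<\tfrac{\pi}{2}$,
\[
\tan\varphi=\tan(\lambda y)\coth(\lambda(x+d))\,.
\]
Since the tangent line to $\partial D$ at $P$ is spanned by $(-\sin\theta,\cos\theta)$, the curve $\mathrm{H}^\lambda_t$ meets $\partial D$ orthogonally at $P$ precisely when $P\in\mathrm{H}^\lambda_t$ and the tangent angle of $\mathrm{H}^\lambda_t$ at $P$ equals $\theta$.

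First I would fix the admissible range $\lambda\in(0,\tfrac{\pi}{2\sin\theta})$, which is exactly the condition that $P$ can lie in the fundamental domain (i.e.\ $\lambda\sin\theta<\tfrac{\pi}{2}$). For such $\lambda$ the incidence condition $P\in\mathrm{H}^\lambda_t$ determines $t$ uniquely, via $\mathrm{e}^{\lambda^2t}=\sin(\lambda\sin\theta)/\sinh(\lambda(\cos\theta+d))$ (the right-hand side being positive), so the whole problem collapses to showing that
\[
g(\lambda):=\tan(\lambda\sin\theta)\coth(\lambda(\cos\theta+d))=\tan\theta
\]
has exactly one root in $(0,\tfrac{\pi}{2\sin\theta})$. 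As $\lambda\to0^+$ one has $g(\lambda)\to\sin\theta/(\cos\theta+d)<\tan\theta$ because $d>0$, while as $\lambda\to(\tfrac{\pi}{2\sin\theta})^-$ the factor $\tan(\lambda\sin\theta)$ blows up and $\coth(\lambda(\cos\theta+d))$ stays positive and finite, so $g(\lambda)\to+\infty$; hence a root exists by the intermediate value theorem. For uniqueness I would compute the logarithmic derivative,
\[
\frac{g'(\lambda)}{g(\lambda)}=\frac{2\sin\theta}{\sin(2\lambda\sin\theta)}-\frac{2(\cos\theta+d)}{\sinh(2\lambda(\cos\theta+d))}\,,
\]
and observe that, since $\sin w<w<\sinh w$ for $w>0$ (and $2\lambda\sin\theta\in(0,\pi)$ keeps the first sine positive), the first term exceeds $1/\lambda$ while the second is strictly less than $1/\lambda$, so $g'>0$ on the whole interval. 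Thus $g$ is strictly increasing, the root is unique, and recovering $t$ from $\lambda$ as above finishes the argument.

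I do not expect a genuine obstacle here. The only point demanding care is the reduction itself --- tracking the parametrization, pinning down the admissible $\lambda$-range through the fundamental-domain constraint $\lambda\sin\theta<\tfrac{\pi}{2}$, and checking that $t$ is honestly a function of $\lambda$ --- after which the monotonicity of $g$, which is the heart of the uniqueness claim, drops out of the textbook inequalities $\sin w<w<\sinh w$.
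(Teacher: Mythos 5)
Your argument is correct and follows essentially the same route as the paper: fix $\lambda\in(0,\tfrac{\pi}{2\sin\theta})$, solve the incidence condition for $t$, reduce orthogonality at $(\cos\theta,\sin\theta)$ to a single transcendental equation in $\lambda$, and establish a unique root by showing strict monotonicity via $\sin w<w<\sinh w$. The paper writes the equation as $g(\lambda,\theta)=0$ with $g=\tanh(\lambda(\cos\theta+d))\cot(\lambda\sin\theta)\tan\theta-1$ and checks $dg/d\lambda<0$ directly, whereas you work with the reciprocal combination and its logarithmic derivative --- an equivalent, slightly cleaner, way to exhibit the same monotonicity.
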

\begin{proof}
Given any $\theta\in(0,\frac{\pi}{2})$, substituting the point $(\cos\theta,\sin\theta)$ for $(x,y)$ in the defining equation $\sin(\lambda y)=\mathrm{e}^{\lambda^2t}\sinh(\lambda(x+d))$ and solving for $t$ yields for each $\lambda\in(0,\frac{\pi}{2\sin\theta})$ the unique timeslice of the (fundamental domain of the) Hairclip solution which intersects $\partial D$ at $(\cos\theta,\sin\theta)$; namely,
\[
t=\lambda^{-2}\ln\left(\frac{\sin(\lambda\sin\theta)}{\sinh(\lambda(\cos\theta+d))}\right)\,.
\]
At that point, the normal satisfies
\begin{align*}
\nu_\lambda(\cos\theta,\sin\theta)\cdot(\cos\theta,\sin\theta)={}&\frac{\sin(\lambda\sin\theta)\cos\theta-\tanh(\lambda(\cos\theta+d))\cos(\lambda\sin\theta)\sin\theta}{\tanh(\lambda(\cos\theta+d))\cos(\lambda\sin\theta)}\\
={}&-\frac{\tan(\lambda\sin\theta)\cos\theta}{\tanh(\lambda(\cos\theta+d))}g(\lambda,\theta)\,,
\end{align*}
where
\[
g(\lambda,\theta):=\tanh(\lambda(\cos\theta+d))\cot(\lambda \sin\theta)\tan\theta-1\,.
\]
Observe that
\[
\lim_{\lambda\searrow 0}g(\lambda,\theta)=d\cdot \sec\theta>0, \ \lim_{\lambda\nearrow\frac{\pi}{2\sin\theta}}g(\lambda,\theta)=-1<0
\]
and
\begin{align*}
\frac{dg}{d\lambda}={}&\tan\theta\Big[(\cos\theta+d)\cot(\lambda \sin\theta)\sech^2(\lambda(\cos\theta+d))\\
{}&\pushright{-\sin\theta \csc^2(\lambda \sin\theta)\tanh(\lambda(\cos\theta+d))\Big]}\\
={}&\frac{\tan\theta\tanh(\lambda(\cos\theta+d))}{\lambda\sin(\lambda\sin\theta)}\left[\frac{\lambda(\cos\theta+d)}{\sinh(\lambda(\cos\theta+d))}\cos(\lambda \sin\theta)\sech(\lambda(\cos\theta+d))\right.\\
{}&\pushright{\left.-\frac{\lambda\sin\theta}{\sin(\lambda \sin\theta)}\right]}\\
\le{}&\frac{\tan\theta\tanh(\lambda(\cos\theta+d))}{\lambda\sin(\lambda\sin\theta)}\left[\cos(\lambda \sin\theta)\sech(\lambda(\cos\theta+d))-1\right]\\
<{}&0
\end{align*}
for $\lambda \ \in \ (0,\frac{\pi}{2\sin\theta})$. It follows that there exists a unique $\lambda$ such that 
\[
\nu_\lambda(\cos\theta,\sin\theta)\cdot(\cos\theta,\sin\theta)=0\,.
\]
The claim follows.
\end{proof}

\begin{rem}
Note that, since $\lim_{\theta\to 0}g(\lambda,\theta)=\frac{\tanh(\lambda(d+1)))}{\lambda}-1$, the function $g(\lambda,\theta)$ is non-negative at $\theta=0$ so long as $\lambda\geq \lambda_0$, where $\lambda_0$ is the unique positive solution to the equation
\[
\lambda=\tanh(\lambda(d+1))\,.
\]
\end{rem}

\begin{prop}[\emph{A priori} estimates]
Let $\Gamma\subset D_+$ be a smooth, convex embedding of a closed interval with left endpoint $o=(-d,0)$, $d\in(0,1]$, and right endpoint meeting $\partial D$ orthogonally, and suppose that the curvature $\kappa$ of $\Gamma$ increases monotonically with respect to arclength from $o$. Denote by $\underline\theta$ resp. $\overline\theta$ the least resp. greatest value taken by the turning angle along $\Gamma$ and by $\overline\kappa=\kappa(\overline\theta)$ the greatest value taken by $\kappa$.

The circle $C_{\overline\theta}$ lies below $\Gamma$. Thus,
\begin{equation}\label{eq:curvature lower bound}
\overline\kappa\ge\frac{\sin\overline\theta}{a+\cos\overline\theta}
\end{equation}
and
\begin{equation}\label{eq:gradient lower bound}
\underline\theta\ge\mathrm{arccot}\left(\frac{1+a\cos\overline\theta}{b\sin\overline\theta}\right)\,,
\end{equation}
where $b:=\frac{1}{2}(d^{-1}-d)$ and we recall that $a:=\frac{1}{2}(d^{-1}+d)$, with the right hand side taken to be zero in case $d=1$.
\end{prop}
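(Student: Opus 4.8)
The plan is to compare $\Gamma$ directly with the circular arc $C_{\overline\theta}$. First I would fix the geometry: since $\Gamma$ meets $\partial D$ orthogonally at its right endpoint and the turning angle there equals $\overline\theta$, that endpoint must be $q:=(\cos\overline\theta,\sin\overline\theta)$ — the very point at which $C_{\overline\theta}$ meets $\partial D$ — and at $q$ both $\Gamma$ and $C_{\overline\theta}$ are tangent to the line $\ell$ through $q$ and the origin. Thus $\Gamma$ and $C_{\overline\theta}$ are two convex arcs sharing the endpoints $o$ and $q$ and a common tangent at $q$; the one distinction is that $\kappa$ increases along $\Gamma$ (from $o$) whereas $C_{\overline\theta}$ has constant curvature $\frac{\sin\overline\theta}{a+\cos\overline\theta}$.

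The crux is to show that $C_{\overline\theta}$ lies below $\Gamma$, i.e.\ that $\Gamma$ is contained in the closed disc $\mathcal D$ bounded by the circle underlying $C_{\overline\theta}$, meeting its boundary only at $o$ and $q$. I would argue by a continuity (first-contact) argument along the family $\{C_\phi\}_{\phi\in(0,\overline\theta]}$: as $\phi\to0^+$ the arc $C_\phi\cap D$ converges to the horizontal diameter $[-1,1]\times\{0\}$, so $C_\phi$ lies below $\Gamma$ for $\phi$ small (the arc is near the $x$-axis, $\Gamma$ is interior to $D_+$, and they meet only at $o$); if $C_{\overline\theta}$ failed to lie below $\Gamma$ there would be a first $\phi_0\in(0,\overline\theta]$ at which $C_{\phi_0}$ makes contact with $\Gamma$, either by an interior tangency or by the tangent line to $C_{\phi_0}$ at $o$ catching up to that of $\Gamma$. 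In either case the contact is tangential, with $C_{\phi_0}$ touching $\Gamma$ from its underside; comparing curvatures at the contact point and invoking the monotonicity of $\kappa$ along $\Gamma$ — so that $\kappa-\kappa_{C_{\phi_0}}$, viewed as a function of the turning angle, changes sign at most once — rules this out, since a curve of monotone curvature cannot meet a circle tangentially from the wrong side, nor cross it transversally, while simultaneously agreeing with it at two points and being tangent at one of them. (Equivalently one can run a Sturmian intersection count for $\Gamma$ against the circle underlying $C_{\overline\theta}$, whose number of intersections is bounded in terms of that single sign change, and note that the behaviour prescribed at $o$ and $q$ already exhausts the budget.)

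Granting this, both inequalities drop out of local comparison. For \eqref{eq:curvature lower bound}: at $q$ the arcs share a tangent with $C_{\overline\theta}$ on the side of $\Gamma$ away from its centre of curvature, so $\Gamma$ bends there at least as sharply as $C_{\overline\theta}$, whence $\overline\kappa\ge\frac{\sin\overline\theta}{a+\cos\overline\theta}$. (Alternatively, and independently of the previous step: by monotonicity of $\kappa$ the curve $\Gamma$ lies outside its osculating disc at $q$, a Tait--Kneser-type fact, so $o$ lies outside that disc; since $C_{\overline\theta}$ is the unique circle through $o$ tangent to $\ell$ at $q$, the osculating radius $\overline\kappa^{-1}$ cannot exceed that of $C_{\overline\theta}$.) For \eqref{eq:gradient lower bound}: at $o$ the arc $C_{\overline\theta}$ leaves from the underside of $\Gamma$, so its turning angle there is at most $\underline\theta$; and since the centre and radius of $C_{\overline\theta}$ are $\big(-a,\frac{1+a\cos\overline\theta}{\sin\overline\theta}\big)$ and $\frac{a+\cos\overline\theta}{\sin\overline\theta}$, one computes $o-\text{(centre)}=\big(b,-\frac{1+a\cos\overline\theta}{\sin\overline\theta}\big)$, so the tangent to $C_{\overline\theta}$ at $o$ has turning angle exactly $\mathrm{arccot}\!\left(\frac{1+a\cos\overline\theta}{b\sin\overline\theta}\right)$ (and $0$ when $d=1$, so the bound is then vacuous). (Self-contained variant: writing $q-o=\int_{\underline\theta}^{\overline\theta}\kappa(\theta)^{-1}(\cos\theta,\sin\theta)\,d\theta$ with $\theta$ the turning angle, the fact that $\kappa^{-1}$ is decreasing forces the direction of $q-o$ to sit at angle $\le\tfrac12(\underline\theta+\overline\theta)$ to the $x$-axis; the same identity for the constant-curvature $C_{\overline\theta}$ puts its chord direction at exactly $\tfrac12(\underline\theta^{C}+\overline\theta)$, where $\underline\theta^{C}$ is its turning angle at $o$, and equating the chords gives $\underline\theta\ge\underline\theta^{C}$.)

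The main obstacle I anticipate is the first step. Making the first-contact/Sturm argument airtight requires handling the shared endpoints $o$ and $q$ carefully in the intersection bookkeeping (the arcs genuinely meet there — transversally at $o$, tangentially at $q$), excluding degenerate tangencies by using the strict monotonicity of $\kappa$ (a flat arc of $\kappa$ forcing $\Gamma=C_{\overline\theta}$ outright, hence equality throughout), and verifying the start of the continuity argument, namely that $C_\phi$ really does lie strictly below $\Gamma$ off $o$ for all small $\phi$.
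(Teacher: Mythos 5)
Your setup (identifying the right endpoint as $q=(\cos\overline\theta,\sin\overline\theta)$, the common tangent line there, the centre $(-a,\tfrac{1+a\cos\overline\theta}{\sin\overline\theta})$ and radius $\tfrac{a+\cos\overline\theta}{\sin\overline\theta}$) is correct, and both of your ``self-contained variants'' for the two displayed inequalities are genuine alternatives that work: the Tait--Kneser observation that $o$ lies outside the osculating disc at $q$ gives \eqref{eq:curvature lower bound} directly, and the chord-direction identity $q-o=\int_{\underline\theta}^{\overline\theta}\kappa^{-1}(\cos\theta,\sin\theta)\,d\theta$ together with $\kappa^{-1}$ decreasing gives \eqref{eq:gradient lower bound} directly. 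These are arguably more elementary than the paper's route.

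However, the first assertion of the proposition --- that $C_{\overline\theta}$ lies below $\Gamma$ --- is also part of the statement (and is used verbatim later in the paper's uniqueness section), and your argument for it has a real gap. The paper's proof translates $C_{\overline\theta}$ itself vertically downward: the translated circle has the \emph{same} curvature $1/r(\overline\theta)$, and the decisive step is that $\Gamma$ and the original $C_{\overline\theta}$ make first-order contact at the shared right endpoint $q$, so that $\kappa\ge 1/r(\overline\theta)$ on the segment from the interior touching point to $q$ forces $\Gamma$ to lie on or above $C_{\overline\theta}$ there, contradicting the touching point being strictly below $C_{\overline\theta}$. Your continuity argument instead sweeps through the family $\{C_\phi\}$. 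For $\phi_0<\overline\theta$ the arc $C_{\phi_0}$ does not share the right endpoint $q$ with $\Gamma$ (its endpoint is $(\cos\phi_0,\sin\phi_0)\ne q$), so at a putative first tangency $p_0$ the local picture --- $\kappa_\Gamma(p_0)\ge\kappa_{C_{\phi_0}}$ with $C_{\phi_0}$ below $\Gamma$ --- is perfectly consistent and produces no contradiction by itself; the anchoring at the shared boundary point, which the paper's argument exploits, is unavailable. Your fallback Sturm count for $\Gamma$ against the circle underlying $C_{\overline\theta}$ is only sketched: knowing $\kappa-\kappa_C$ changes sign at most once bounds the number of crossings, but it does not by itself determine on which side of the circle $\Gamma$ sits at the tangency $q$ (which is precisely the content of the claim), so ``exhausting the budget'' at $o$ and $q$ does not close the argument as stated. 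To repair the proof you would need to either switch to the paper's vertical-translation comparison, or carry out the Sturm-type bookkeeping in full, including a separate argument pinning down the side of the tangency at $q$.
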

\begin{proof}
Suppose, to the contrary, that $C_{\overline\theta}$ does not lie below $\Gamma$. Then some point of $\Gamma$ must lie strictly below $C_{\overline\theta}$, and hence (since the endpoints of the two curves agree) upon translating $C_{\overline\theta}$ downwards, the two curves will continue to intersect until some final moment, at which they must make first order contact at some interior point $q\in\Gamma$. At this point, the curvature $\kappa$ of $\Gamma$ must be no less than $1/r(\overline\theta)$ (the curvature of $C_{\overline\theta}$). But then, by the monotonicity of $\kappa$, $\kappa$ must exceed $1/r(\overline\theta)$ on the whole segment of $\Gamma$ joining $q$ to $\partial D$, in which case (since $\Gamma$ and $C_{\overline\theta}$ make first order contact at $\partial D$) the point $q$ must lie \emph{strictly above} $C_{\overline\theta}$, which is absurd. So $C_{\overline\theta}$ must indeed lie below $\Gamma$. The first inequality is then immediate and the second is straightforward.
\end{proof}

\begin{figure}[ht]
\includegraphics[width=0.6\textwidth]{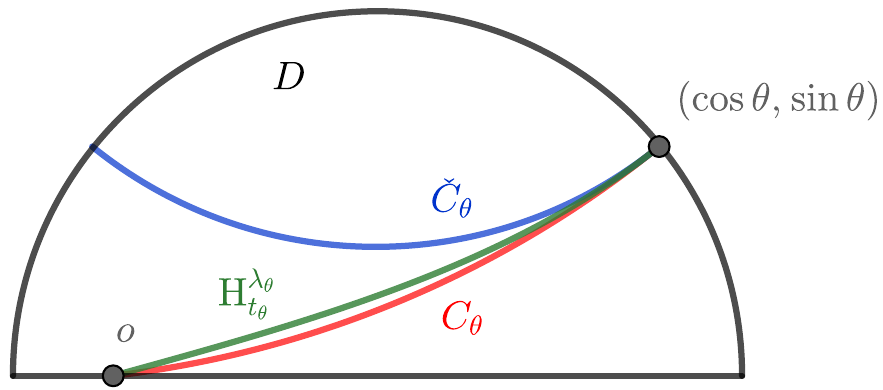}
\centering
\caption{Scaled hairclip timeslice and circular arcs through the prescribed boundary points $o$ and $(\cos\theta,\sin\theta)$.}
\end{figure}

\section{Existence}

For each $d\in(0,1]$ and $\rho\in(0,\frac{\pi}{2})$, let $\Gamma^\rho\subset D_+$ be a smooth oriented arc satisfying the following properties.
\begin{itemize}
\item The left endpoint of $\Gamma^\rho$ is $o=(-d,0)$, where its curvature vanishes, and its right endpoint meets $\partial D$ orthogonally at $(\cos\rho,\sin\rho)$.
\item $\Gamma^\rho$ is convex.
\item The curvature of $\Gamma^\rho$ is monotone increasing with respect to arclength from $o$.
\end{itemize}

For example, we could take $\Gamma^\rho:=\mathrm{H}^{\lambda_\rho}_{t_\rho}\cap D$, where $(\lambda_\rho,t_\rho)$ are the unique choice of $(\lambda,t)$ which ensure that $\mathrm{H}_t^\lambda$ meets $\partial D$ orthogonally at $(\cos\rho,\sin\rho)$.

\begin{lem}[Very old (but not ancient) solutions]\label{lem:old solutions}
For each $d\in(0,1]$ and $\rho\in(0,\frac{\pi}{2})$ there exist $\alpha_\rho<0$ such that $\alpha_\rho\to-\infty$ as $\rho\to 0$ and a smooth\footnote{More precisely,$\{\Gamma^\rho_t\}_{t\in[\alpha_\rho,\omega_d)}$ is given by a family of immersions of the interval $[0,1]$ which is of class $C^\infty([0,1]\times(\alpha_\rho,\omega_d))\cap C^{3+\beta,1+\frac{\beta}{2}}([0,1)\times[\alpha_\rho,\omega_d))\cap C^{2+\beta,1+\frac{\beta}{2}}((0,1]\times[\alpha_\rho,\omega_d))$ for any $\beta\in(0,1)$. Without additional compatibility conditions at the boundary points, higher regularity at the initial time may fail. However, if the curvature of $\Gamma^\rho$ is odd resp. even at its left resp. right boundary point, then the solution will be smooth up to the left resp. right boundary point at the initial time.} curve shortening flow $\{\Gamma^\rho_t\}_{t\in[\alpha_\rho,\omega_d)}$ in $D$ exhibiting the following properties.
\begin{itemize}
\item $\Gamma^\rho_{\alpha_\rho}=\Gamma^\rho$.
\item For each $t\in[\alpha_\rho,\omega_d)$, $\Gamma^\rho_t$ is an oriented embedding of a closed interval, with left endpoint $o=(-d,0)$ and right endpoint meeting $\partial D$ orthogonally.
\item For each $t\in[\alpha_\rho,\omega_d)$, $\Gamma^\rho_t$ is convex.
\item For each $t\in[\alpha_\rho,\omega_d)$, the curvature of $\Gamma^\rho_t$ is monotone increasing with respect to arclength from $o$.
\item If $d<1$, then $\omega_d=\infty$ and $\Gamma^\rho_t$ converges uniformly in the smooth topology as $t\to\infty$ to the minimizing arc $\{(x,0):x\in[-1,-d]\}$.
\item If $d=1$, then $\omega_d\in(0,\infty)$ and $\Gamma^\rho_t$ converges uniformly as $t\to\omega_d$ to the point $o$, and there are sequences of times $t_j\nearrow\omega_d$, points $p_j\in\Gamma^\rho_{t_j}$, and scales $\lambda_j\nearrow\infty$ such that the sequence $\{\lambda_j(\Gamma^\rho_{\lambda_j^{-2}t+t_j}-p_j)\}_{t\in[\lambda_j^2(\alpha_\rho-t_j),\lambda_j^2(\omega_\rho-j^{-1}-t_j))}$ converges locally uniformly in the smooth topology as $j\to\infty$ to the right half of the downwards translating Grim Reaper.
\end{itemize}
\end{lem}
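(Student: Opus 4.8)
The plan is to run the Dirichlet--Neumann flow with initial curve $\Gamma^\rho$ and bootstrap, out of the maximum principle and the barriers of Proposition \ref{prop:barriers}, first the four bulleted structural properties and then the asymptotics. Concretely, take $\Gamma^\rho:=\mathrm{H}^{\lambda_\rho}_{t_\rho}\cap D$ and $\alpha_\rho:=t_\rho$, where $(\lambda_\rho,t_\rho)$ is the unique pair for which $\mathrm{H}^{\lambda_\rho}_{t_\rho}$ meets $\partial D$ orthogonally at $(\cos\rho,\sin\rho)$. Since $g(\cdot,\theta)$ is strictly decreasing one has $\lambda_\rho\to\lambda_0\in(0,\infty)$ as $\rho\to 0$, where $\lambda_0$ is the root of $\lambda=\tanh(\lambda(d+1))$, and then the closed form $t_\rho=\lambda_\rho^{-2}\ln\big(\sin(\lambda_\rho\sin\rho)/\sinh(\lambda_\rho(\cos\rho+d))\big)$ forces $t_\rho\to-\infty$, which is the asserted behaviour of $\alpha_\rho$. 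Short-time existence of a smooth Dirichlet--Neumann flow emanating from $\Gamma^\rho$, with the regularity recorded in the footnote, follows from standard parabolic theory in a graphical gauge; write $\{\Gamma^\rho_t\}_{t\in[\alpha_\rho,\omega_d)}$ for the maximal such flow.

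The structural properties propagate by the maximum principle. The curvature solves $\partial_t\kappa=\partial_s^2\kappa+\kappa^3$; the Dirichlet condition forces the normal speed at $o$, hence $\kappa(o,\cdot)$, to vanish, while orthogonality to $\partial D$ gives a Robin-type condition $\partial_s\kappa=\kappa$ at the right endpoint (as in \cite{MR4668092}). The maximum principle with Hopf's lemma at the right endpoint then yields $\kappa>0$ in the interior and at the right endpoint for $t>\alpha_\rho$, so each $\Gamma^\rho_t$ is locally uniformly convex; and $v:=\partial_s\kappa$, which solves $\partial_t v=\partial_s^2 v+4\kappa^2 v$ with $v=\kappa>0$ at the right endpoint and $v=\partial_s\kappa\ge 0$ automatically at $o$ (as $\kappa\ge 0$ vanishes there), stays non-negative (indeed positive in the interior for $t>\alpha_\rho$), so $\kappa$ remains monotone increasing from $o$. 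To confine the flow, the \emph{a priori} estimate places $\Gamma^\rho$ in $\overline{D}_+$ above the subsolution $C_{\theta^-(\,\cdot\,+c)}$, above the static unstable arc $\{(x,0):x\in[-d,1]\}$, and below the supersolution $\check C_{\theta^+(\,\cdot\,+c)}$ for a suitable shift $c$; the avoidance principle for the Dirichlet--Neumann flow then traps $\Gamma^\rho_t$ between these barriers throughout $[\alpha_\rho,\omega_d)$, so in particular $\Gamma^\rho_t\subset\overline{D}_+$. Embeddedness persists: an interior self-tangency is excluded because the (boundary-counted) number of self-intersections cannot increase, the curvature monotonicity together with $\kappa(o,\cdot)\equiv 0$ forbids the curve folding back near the Dirichlet endpoint, and convexity with orthogonality does the same at the free endpoint --- this being the role flagged for the monotonicity hypothesis after Theorem \ref{thm:convergence}. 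Finally, comparing the flow with its own time-translates, using that $\Gamma^\rho$ is convex (hence initially moves strictly inward), shows $\{\Gamma^\rho_t\}$ is monotone decreasing in the sense of enclosure.

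For the asymptotics, suppose first $d<1$. By the Remark after Proposition \ref{prop:barriers}, $\theta^-$ exists for all time with $\theta^-(t)\to\pi$, so $C_{\theta^-(\,\cdot\,+c)}$ sweeps down to the minimizing arc; as $\Gamma^\rho_t$ lies between this subsolution and the fixed arc $\Gamma^\rho$ in $\overline{D}_+$, with curvature bounded on finite time intervals via interior estimates for convex flows (the area it encloses with the minimizing arc stays positive away from $\omega_d$), no singularity forms in finite time and $\omega_d=\infty$. The monotone-decreasing flow then converges, and since $\int_{\alpha_\rho}^\infty\!\int\kappa^2\,ds\,dt<\infty$ (length decreases and is bounded below) the limit is a flat Dirichlet--Neumann arc; being trapped above $C_{\theta^-(\,\cdot\,+c)}$, whose limit is the minimizing arc, the limit can only be the minimizing arc, and parabolic regularity upgrades this to smooth convergence. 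If instead $d=1$, then $\theta^-$ reaches $\pi$ at the finite time $\log 2$, so $C_{\theta^-(\,\cdot\,+c)}$ contracts to the boundary point $o=(-1,0)$ in finite time; a flow with $\omega_d=\infty$ would have to converge to a flat arc with left endpoint $o\in\partial D$, i.e.\ to the full diameter exiting at $(1,0)$, contradicting that $\Gamma^\rho_t$ lies above $C_{\theta^-(\,\cdot\,+c)}$, whose free endpoint tends to $(-1,0)$. Hence $\omega_d<\infty$; being convex and embedded, $\Gamma^\rho_t$ contracts to a point as $t\to\omega_d$, and since its left endpoint is pinned at $o$ this point is $o$. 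Running Hamilton's type-II rescaling about points $(p_j,t_j)\in[\alpha_\rho,\omega_d-j^{-1}]$ maximising $\kappa^2(\omega_d-j^{-1}-t)$ and setting $\lambda_j:=\kappa(p_j,t_j)$, the rescaled flows $\{\lambda_j(\Gamma^\rho_{\lambda_j^{-2}t+t_j}-p_j)\}$ subconverge to a convex eternal solution attaining its space-time curvature maximum, hence --- by Hamilton's differential Harnack inequality and the rigidity in its equality case --- to a Grim Reaper; since the rescaled domains $\lambda_j(D-p_j)$ exhaust a half-plane met orthogonally along its edge, the limit is the right half of the downward translating Grim Reaper.

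The main obstacle is the last point: one must verify that the $d=1$ singularity is genuinely of type~II, equivalently that Hamilton's rescaling yields a \emph{non-flat} limit (i.e.\ $\sup\kappa\cdot(\omega_d-t)^{1/2}\to\infty$), by exploiting that the flow collapses into the right-angle corner of $D_+$ at $o$ --- it is pinched between $\partial D$ and $C_{\theta^-(\,\cdot\,+c)}$ in a region whose width decays faster than its length --- which excludes a round (type~I) blow-up; quantifying this collapse, and confirming that the eternal limit carries the boundary condition identifying it as the \emph{half} Grim Reaper rather than some other convex eternal flow with free boundary, is the crux.
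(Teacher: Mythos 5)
Your broad outline—evolve the hairclip timeslice, propagate $\kappa>0$ and $\kappa_s>0$ by the maximum principle, trap with the barriers of Proposition~\ref{prop:barriers}, then converge/blow up—matches the paper's strategy, but there are two genuine gaps and a few places where you elide the work the paper actually does.

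The first gap is at the very start. You dispose of short-time existence with ``standard parabolic theory in a graphical gauge,'' but the mixed problem here is not routine: the Neumann endpoint is a free boundary forced to move along $\partial D$ while meeting it orthogonally, the Dirichlet endpoint is pinned, and when $d=1$ the two boundary conditions meet at a corner $o\in\partial D$. The paper sidesteps all of this with a specific device---the \emph{odd doubling} of $\Gamma^\rho$ through $o$ turns the Dirichlet--Neumann problem into a Neumann--Neumann one to which Stahl's free-boundary short-time existence theorem \cite[Theorem 2.1]{Stahl96a} applies directly, and the descent by the $\pi$-rotation symmetry (via uniqueness) recovers the Dirichlet condition. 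That reduction is a real step of the proof and the regularity class recorded in the footnote is inherited from Stahl; invoking it as ``standard'' is where a reader would stop you.

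The second gap is the one you flag yourself: you do not actually establish that the $d=1$ singularity is type~II, nor that the eternal limit is a half Grim Reaper, and your proposed route (quantifying a width/length collapse ratio in the corner) is not carried out. The paper's observation is shorter and sharper: a type-I rescaling, by the Neumann condition, would produce a shrinking half-circle as the self-similar limit, and that limit is incompatible with the Dirichlet anchoring at $o$; hence the blow-up is type~II, and Hamilton's Harnack rigidity then yields the (half) Grim Reaper. You should also note that your argument for $\omega_d<\infty$ when $d=1$ (that an eternal flow ``would have to converge to the full diameter'') is not quite right---the flow is moving monotonically \emph{away} from the diameter towards $o$, and what actually rules out $\omega_d=\infty$ is that the subsolution $C_{\theta^-}$ reaches $o$ at the finite time $\log 2$. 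Smaller issues: in the $d<1$ case the paper does not argue via $\int\!\!\int\kappa^2<\infty$; it shows the monotone quantity $\overline\theta$ must tend to $\pi$ (else the gradient bound \eqref{eq:gradient lower bound} plus parabolic regularity would extend the flow past $T$), then deduces $\underline\theta\to\pi$ and upgrades to smooth convergence graphically---this is where the gradient estimate earns its keep and your ``interior estimates for convex flows'' gloss skips it. Finally, your choice $\alpha_\rho:=t_\rho$ is not the paper's normalization ($\overline\theta(0)=\tfrac{\pi}{2}$ in the shifted time), and $t_\rho<0$ need not hold for all $\rho$; this is harmless for the lemma as stated but matters for the subsequent compactness argument in Theorem~\ref{thm:existence}, which uses the $\overline\theta$-normalization to ensure the limit passes through $(0,1)$ at time $0$.
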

\begin{proof}
%
Form the ``odd doubling'' $\check\Gamma^\rho$ of $\Gamma^\rho$ by taking the union of $\Gamma^\rho$ with its rotation through angle $\pi$ about $o$. Since $\check\Gamma^\rho$ is a regular curve of class $C^2$ and there exists a ball $B$ about $(\cos\rho,\sin\rho)$ (of radius $1/10$, say) which is disjoint from the rotation of $\Gamma^\rho$ through angle $\pi$ about $o$, where $\Gamma^\rho$ meets $\partial D$ orthogonally, Stahl's short-time existence theorem for free boundary mean curvature flow \cite[Theorem 2.1]{Stahl96a} yields a solution $\{\check\Gamma^\rho_t\}_{t\in[0,\delta)}$ to Neumann--Neumann curve shortening flow with boundary on the odd doubling of $\partial D\cap B$ for a short time $\delta>0$. Since this solution is uniquely determined by its initial condition, it must be invariant under rotation through angle $\pi$ about $o$, and hence descend to a solution $\{\hat\Gamma^\rho_t\}_{t\in[0,\delta)}$ to Dirichlet--Neumann curve shortening flow in $D$ with Dirichlet condition $o$ and initial condition $\Gamma^\rho$. Denote by $T$ the maximal time of existence of the latter.

Since the curvature of $\{\hat\Gamma^\rho_t\}_{t\in[0,T)}$ satisfies
\[
\left\{\begin{aligned}(\partial_t-\Delta)\kappa={}&\kappa^3\\
\kappa={}&0\;\;\text{at $o$, and}\\
\kappa_s={}&\kappa\;\;\text{at}\;\;\partial D\,,
\end{aligned}\right.
\]
where $s$ denotes arclength from $o$, the maximum principle (and Hopf boundary point lemma) ensure that $\kappa$ remains positive on $\hat\Gamma^\rho_t\setminus\{o\}$ for $t>0$.

For similar reasons, positivity of $\kappa_s$ is also preserved. Indeed, using the commutator relation
\[
[\partial_t,\partial_s]=\kappa^2\partial_s\,,
\]
the identity $0=\kappa_t=\Delta\kappa$ at $o$, and the positivity of $\kappa$ away from $o$, we find that
\[
\left\{\begin{aligned}(\partial_t-\Delta)\kappa_s={}&4\kappa^2\kappa_s\\
(\kappa_s)_s={}&0\;\;\text{at $o$, and}\\
\kappa_s>{}&0\;\;\text{at}\;\;\partial D\,,
\end{aligned}\right.
\]
so the claim once again follows from the maximum principle.

Since $\overline\theta_t=\overline\kappa>0$ and $\overline\theta<\pi$ (when $d<1$, the maximum principle prevents $\hat\Gamma^\rho_t$ from ever reaching the minimizing arc --- a stationary solution to the flow) we find that $\overline\theta$ must attain a limit as $t\to T$. We claim that this limit is $\pi$. Indeed, if $\overline\theta\le \theta_0<\pi$ for all $t\in[0,T)$, then, representing the solution as a graph over the line $\{(-d,y):y\in\R\}$, the ``gradient estimate'' \eqref{eq:gradient lower bound} yields a uniform bound for the gradient, at least when $d<1$. But then, by applying parabolic regularity theory (see, for instance, \cite{Lieberman}) to the graphical Dirichlet--Neumann curve shortening flow equation
\[
\left\{\begin{aligned}x_t={}&\frac{x_{yy}}{1+x_y^2}\;\;\text{in}\;\; [0,\overline y(t)]\\
x(0,t)={}&0\\
x_y(\overline y(t),t)={}&\cot\overline\theta(t)\,,
\end{aligned}\right.
\]
where $\overline y(t):=\sin\overline\theta(t)$, we obtain uniform estimates for all derivatives of the graph functions $x(\cdot,t)$ (cf. \cite{Stahl96a}). To obtain corresponding estimates when $d=1$, we instead represent the solution as a graph over the ``tilted'' line through $(-1,0)$ and $(\cos(\overline\theta(T)),\sin(\overline\theta(T)))$ and use the ``gradient estimate'' $\underline\theta\ge 0$. The Arzel\`a--Ascoli theorem and monotonicity of the flow now ensure that $x(\cdot,t)$ takes a smooth limit as $t\to T$, at which point the flow can be smoothly continued by the above short time existence argument, violating the maximality of $T$. We conclude that $\overline\theta(t)\to \pi$ as $t\to T$. 

It now follows from \eqref{eq:gradient lower bound} that $\underline\theta(t)\to \pi$ as $t\to T$. When $d=1$, we conclude that $\hat\Gamma^\rho_t$ contracts to $o$ as $t\to T$. Note that in this case $T<\infty$ since the lower barriers $C_{\theta^-(t)}$ contract to $o$ in finite time. A more or less standard ``type-I vs type-II" blow-up argument (cf. \cite{FBDistanceComparison}) then guarantees convergence to the half Grim Reaper after performing a standard type-II blow-up. (The flow must be type-II because the limit of a standard type-I blow-up --- a shrinking semi-circle --- violates the Dirichlet boundary condition.
)

When $d<1$, we conclude that $\hat\Gamma^\rho_t$ converges uniformly to the minimizing arc $\{(x,0):x\in[-1,-d]\}$ as $t\to T$. But then, for large enough $t$, $\hat\Gamma^\rho_t$ may be represented as a graph over the $x$-axis with small gradient, at which point parabolic regularity, short-time existence and the Arzel\`a--Ascoli theorem guarantee that $T=\infty$ and $\hat\Gamma^\rho_t$ converges uniformly in the smooth topology to the minimizing arc.

Finally, since $\overline\theta$ is monotone, there is a unique time $-\alpha_\rho>0$ such that $\overline\theta(-\alpha_\rho)=\frac{\pi}{2}$; since the Neumann--Neumann circle $\check C_{\theta_\rho}$, where $\sin\theta_\rho=\frac{2\sin\rho}{1+\sin^2\rho}$, lies above $\Gamma^\rho$, we find (by suitably time translating the upper barrier $\{\check C_{\theta^+(t)}\}_{t\in(-\infty,0)}$, as in \cite{MR4668092}) that
\[
\alpha_\rho<\frac{1}{2}\log\left(\frac{2\sin\rho}{1+\sin^2\rho}\right)\,.
\]
Time-translating the solution $\{\hat\Gamma^\rho\}_{t\in[0,\infty)}$ by $\alpha_\rho$ now yields the desired very old (but not ancient) solution $\{\Gamma^\rho_t\}_{t\in[\alpha_\rho,\infty)}$.
\end{proof}


Taking the limit as $\rho\to 0$ of these very old (but not ancient) solutions yields our desired ancient solution.

\begin{thm}\label{thm:existence}
Given any $d\in(0,1]$, there exists a convex ancient Dirichlet--Neumann curve shortening flow $\{\Gamma_t\}_{t\in(-\infty,\omega_d)}$ in the upper half disc $D_+$ which converges uniformly in the smooth topology to the unstable critical arc $[-d,1]\times \{0\}$ as $t\to-\infty$. When $d<1$, $\omega_d=\infty$ and $\{\Gamma_t\}_{t\in(-\infty,\omega_d)}$ converges uniformly in the smooth topology as $t\to+\infty$ to the minimizing arc $[-1,-d]\times \{0\}$. When $d=1$, $\omega_d<\infty$ and $\Gamma^\rho_t$ converges uniformly as $t\to\omega_d$ to the point $o$, and there are a sequence of times $t_j\nearrow\omega_d$, right endpoints $p_j\in\Gamma^\rho_{t_j}$, and scales $\lambda_j\nearrow\infty$ such that the sequence $\{\lambda_j(\Gamma^\rho_{\lambda_j^{-2}t+t_j}-p_j)\}_{t\in[\lambda_j^2(\alpha_\rho-t_j),\lambda_j^2(\omega_\rho-j^{-1}-t_j))}$ converges locally uniformly in the smooth topology as $j\to\infty$ to the right half of the downwards translating Grim Reaper.
\end{thm}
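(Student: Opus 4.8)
The plan is to produce $\{\Gamma_t\}_{t\in(-\infty,\omega_d)}$ as a limit, as $\rho\searrow 0$, of the very old solutions $\{\Gamma^\rho_t\}_{t\in[\alpha_\rho,\omega_d)}$ constructed in Lemma \ref{lem:old solutions}: since $\alpha_\rho\to-\infty$, any reasonable limit will be ancient, and the structural properties (convexity, monotonicity of $\kappa$ in arclength from $o$, orthogonality at $\partial D$) and the prescribed behaviour as $t\to\omega_d$ already hold for each fixed $\rho$. Recall these flows are normalized so that the turning angle obeys $\overline\theta^\rho(0)=\tfrac\pi2$. As each $\Gamma^\rho_t$ is convex with total turning $\overline\theta^\rho(t)\le\pi$, the curves $\{\Gamma^\rho_t\}_\rho$ are uniformly Lipschitz; diagonalizing over a countable dense set of times (and invoking the avoidance principle for the flow, or the associated level-set flows) one extracts a subsequence converging, for every $t<\omega_d$, in $C^0$ to a convex arc $\Gamma_t$ joining $o$ to a point of $\partial D$, so that $\{\Gamma_t\}$ is a weak curve shortening flow.

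The heart of the matter is to show that this limit is a \emph{non-flat, classical} flow on all of $(-\infty,\omega_d)$. At $t=0$, non-flatness is immediate from the \emph{a priori} estimate, which gives that $C_{\pi/2}=C_{\overline\theta^\rho(0)}$ lies below $\Gamma^\rho_0$ for every $\rho$, hence below $\Gamma_0$. For general $t$ I would use the dichotomy that, were $\Gamma_{t_*}$ the flat critical arc $[-d,1]\times\{0\}$ at some $t_*<\omega_d$, then, that arc being a stationary solution, uniqueness (for the level-set flow, or classically once regularity is available) would force $\Gamma_t\equiv[-d,1]\times\{0\}$ for all $t\ge t_*$, contradicting $t=0$; moreover $\Gamma_t$ cannot collapse to a point before $\omega_d$ (for $d<1$ its length is $\ge 1-d$; for $d=1$ the same stationarity argument applied to $\{o\}$, together with $\Gamma_t$ dominating the lower barrier $C_{\theta^-}$ on $[0,\omega_d)$, rules out an interior collapse). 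Thus $\Gamma_t$ is a genuine convex arc for every $t<\omega_d$, so by the regularity theory for (weakly) convex curve shortening flow together with boundary estimates as in Lemma \ref{lem:old solutions} it is classical and the convergence improves to $C^\infty_{\mathrm{loc}}$; the structural properties then pass to the limit, and the behaviour as $t\to\omega_d$ — the minimizing arc when $d<1$, and contraction to $o$ with a half-Grim-Reaper type-II blow-up when $d=1$ — is inherited by running the barrier and blow-up arguments of Lemma \ref{lem:old solutions} on $\{\Gamma_t\}$ directly. The main obstacle here is the compactness: to upgrade a merely weak limit to a smooth one, one needs uniform-in-$\rho$ interior and boundary estimates on each compact time interval, i.e. a uniform upper bound for $\overline\kappa^\rho$ (equivalently, a barrier preventing $\overline\theta^\rho$ from collapsing toward $0$ for moderately negative $t$ while it is pinned at $\tfrac\pi2$ at $t=0$). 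I would obtain this by comparison with parabolically rescaled hairclip timeslices $\mathrm H^\lambda_t$, which are eternal curve shortening flows through the fixed point $o$ whose maximal curvature is uniformly bounded for the range of $\lambda$ in play, combined with the gradient estimate \eqref{eq:gradient lower bound} (and its tilted-graph analogue when $d=1$) and parabolic regularity, exactly as in Lemma \ref{lem:old solutions}.

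Finally, to identify the backward limit: $\overline\theta(t)$ increases in $t$ and so tends to some $\overline\theta_{-\infty}\in[0,\tfrac\pi2)$ as $t\to-\infty$; if $\overline\theta_{-\infty}>0$ then the curvature lower bound \eqref{eq:curvature lower bound} gives $\overline\theta'(t)=\overline\kappa(t)\ge\frac{\sin\overline\theta_{-\infty}}{a+\cos\overline\theta_{-\infty}}>0$ for all sufficiently negative $t$, forcing $\overline\theta(t)\to-\infty$, which is absurd. Hence $\overline\theta_{-\infty}=0$: the Neumann endpoint of $\Gamma_t$ tends to $(1,0)$, and, the total turning of the convex arc $\Gamma_t$ tending to $0$, $\Gamma_t$ is squeezed into a region between $[-d,1]\times\{0\}$ and a circular arc through its two endpoints whose width tends to $0$. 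Representing $\Gamma_t$ as a graph over the $x$-axis whose $C^0$-norm tends to $0$ and applying parabolic regularity to the graphical flow equation (which then forces all higher norms to $0$ as well) promotes this to convergence in the smooth topology to the unstable critical arc $[-d,1]\times\{0\}$, completing the proof.
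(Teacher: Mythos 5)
Your overall strategy --- extract the ancient solution as a limit of the very old solutions of Lemma \ref{lem:old solutions} as $\rho\to 0$, using $\alpha_\rho\to-\infty$ for ancientness and a non-degeneracy barrier at time $0$ for non-flatness --- is exactly the paper's. Your non-flatness argument via $C_{\pi/2}$ and your identification of the backward limit (if $\overline\theta_{-\infty}>0$ then \eqref{eq:curvature lower bound} forces $\overline\theta\to-\infty$, absurd) are both sound, and both differ mildly from the paper's choices (the paper uses $\check C_{\theta^+(t)}\to(0,1)$ to pin $\Gamma_0$ through $(0,1)$, and deduces $\overline\theta\to0$ directly from its barrier estimate rather than by contradiction).

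The genuine gap is in the compactness step, which you rightly flag as ``the main obstacle'' but do not actually resolve. You detour through a $C^0$ weak limit and then invoke ``regularity theory for (weakly) convex curve shortening flow'' with the mixed Dirichlet--Neumann boundary condition --- but no such off-the-shelf regularity theory is available here, and the whole point of the estimate you are missing is precisely to avoid needing one. The paper instead differentiates the Neumann condition to get $\overline\theta_t=\overline\kappa$, combines this with the \emph{a priori} bound \eqref{eq:curvature lower bound} to obtain $\overline\theta_t\ge\frac{\sin\overline\theta}{a+\cos\overline\theta}$, and then integrates this backwards via ODE comparison against the sub-barrier ODE \eqref{eq:characteristic ODE} (using the normalization $\overline\theta(0)=\tfrac\pi2=\theta^-(0)$) to get the $\rho$-independent bound $\overline\theta(t)\le\theta^-(t)$ for $t\in[\alpha_\rho,0]$. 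This is what yields uniform gradient control for the graphs over the $x$-axis on every interval $(-\infty,-T]$, whence parabolic Schauder theory and Arzel\`a--Ascoli give a smooth (not merely $C^0$) subsequential limit directly. Your proposed substitute --- comparison with scaled hairclip timeslices --- is not spelled out and is doubtful as stated: the hairclips do not meet $\partial D$ orthogonally, so the avoidance principle does not apply across the Neumann boundary, and it is unclear how such a comparison would yield a $\rho$-uniform bound. You also mischaracterize the estimate you need as ``a barrier preventing $\overline\theta^\rho$ from collapsing toward $0$ for moderately negative $t$''; this is the non-degeneracy issue (which your $C_{\pi/2}$ argument already handles at $t=0$), whereas what compactness requires is an upper bound for $\overline\theta^\rho(t)$ keeping the gradient under control on $(-\infty,-T]$ --- exactly what \eqref{eq:gradient upper bound} provides.
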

\begin{proof}
Given any sequence of angles $\rho_j\searrow 0$, consider the sequence of corresponding very old (but not ancient) solutions $\{\Gamma^j_t\}_{t\in[\alpha_j,\infty)}$ constructed in Lemma \ref{lem:old solutions}. Differentiating the Neumann boundary condition and applying the estimate \eqref{eq:curvature lower bound} yields the inequality
\[
\overline\theta_t=\overline\kappa\ge \frac{\sin\overline\theta}{a+\cos\overline\theta}
\]
on each of these solutions. It follows, by the ODE comparison principle, that each $\{\Gamma^j_t\}_{t\in[\alpha_j,\infty)}$ satisfies
\begin{equation}\label{eq:gradient upper bound}
\overline\theta\le \theta^-
\end{equation}
for $t\in[\alpha_j,0]$, where we recall that $\theta^-$ is the solution to \eqref{eq:characteristic ODE}. Since $\theta^-$ is independent of $j$, this implies uniform estimates for the gradient on any time interval of the form $[-\infty,-T]$, $T>0$, when we represent $\{\Gamma^j_t\}_{t\in[\alpha_j,-T]}$ graphically over the $x$-axis. By parabolic regularity theory and the Arzel\`a--Ascoli theorem, we may then extract a smooth limit of the very old solutions $\{\Gamma^{j}_t\}_{t\in(-\infty,0)}$ after passing to a subsequence. This limit is ancient, since $\alpha_\rho\to-\infty$ as $\rho\to 0$, reaches the point $(0,1)$ at time zero (since each $\Gamma^{j}_t$ intersects the convex domain bounded by $\check C_{\theta^+(t)}$ for each $t\in(-\infty,0)$), and converges uniformly in the smooth topology as $t\to-\infty$ to the unstable Dirichlet--Neumann geodesic from $o$ due to the estimate \eqref{eq:gradient upper bound} (and parabolic regularity theory). The longtime behaviour 
follows from the argument presented above.
\end{proof}

\subsection{Asymptotics}

We now prove precise asymptotics for the height of the ancient solution constructed in Theorem \ref{thm:existence}, assuming the initial conditions for the old-but-not-ancient solutions $\{\Gamma^\rho_t\}_{t\in[\alpha_\rho,\omega_d)}$ are given by the hairclip timeslices $\Gamma^\rho=\mathrm{H}^{\lambda_\rho}_{t_\rho}\cap D$.
\begin{lem}\label{lem:sharp speed lower bound}
On each old-but-not-ancient solution $\{\Gamma^\rho_t\}_{t\in[\alpha_\rho,\omega_d)}$,
\begin{equation*}
\frac{\kappa}{\cos\theta}\ge \lambda_\rho\tan(\lambda_\rho y)\,.
\end{equation*}
\end{lem}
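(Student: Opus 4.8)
The plan is to move to the graphical picture and exploit the fact that the hairclip is the ``separable'' curve shortening flow. While $\Gamma^\rho_t$ is a graph $y=u(x,t)$ over the $x$-axis — which holds at least for all sufficiently negative $t$, the regime in which the lemma is used — one has, with $\theta$ the turning angle, $\frac{\kappa}{\cos\theta}=\kappa\sqrt{1+u_x^2}=\frac{u_{xx}}{1+u_x^2}=u_t$, so the assertion reads $u_t\ge h(u)$ with $h(z):=\lambda_\rho\tan(\lambda_\rho z)$. The decisive point is that $h$ solves the Riccati equation $h'=h^2+\lambda_\rho^2$, equivalently $h''=2hh'$; this is exactly what forces the hairclip graph $v$ to satisfy $v_t=h(v)$ pointwise in $x$ — differentiate $\sin(\lambda_\rho v)=\mathrm{e}^{\lambda_\rho^2t}\sinh(\lambda_\rho(x+d))$ in $t$ — in addition to $v_t=\frac{v_{xx}}{1+v_x^2}$. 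I would therefore introduce
\[
\phi:=\mathrm{e}^{-\lambda_\rho^2t}\sin(\lambda_\rho u)\,,\qquad\psi:=\phi_t=\lambda_\rho\mathrm{e}^{-\lambda_\rho^2t}\cos(\lambda_\rho u)\bigl(u_t-h(u)\bigr)\,,
\]
so that the lemma is equivalent to $\psi\ge0$, noting that along the hairclip $\phi$ is independent of $t$ (hence $\psi\equiv0$).

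Using only $u_t=\frac{u_{xx}}{1+u_x^2}$ and $\sin''=-\sin$, a short computation gives the \emph{linear} identity $(1+u_x^2)\phi_t=\phi_{xx}-\lambda_\rho^2\phi$; differentiating this in $t$ shows $\psi$ satisfies the homogeneous linear parabolic equation $(1+u_x^2)\psi_t=\psi_{xx}-(\lambda_\rho^2+2u_xu_{xt})\psi$, with bounded coefficients (where the flow is smooth). The maximum principle then reduces the claim to $\psi\ge0$ on the parabolic boundary. Two of the three pieces are free: at $t=\alpha_\rho$ the curve $\Gamma^\rho$ is the hairclip timeslice $\mathrm{H}^{\lambda_\rho}_{t_\rho}$, so $u_t=h(u)$ and $\psi\equiv0$ there; and at the Dirichlet endpoint $o$ one has $u\equiv0$ for all $t$, so $\phi\equiv0$ and $\psi\equiv0$.

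The remaining piece, the Neumann endpoint, is the main obstacle. There the free-boundary condition $\kappa_s=\kappa$ holds, that is, $x_1(t)u_x=u$ at the moving endpoint $x=x_1(t)$; differentiating this along the flow produces a boundary relation of the form $\psi_x=A(t)+B(t)\psi+C(t)\psi^2$ at $x=x_1(t)$, and the plan is to check that $A,B$ have the sign needed for the Hopf boundary point lemma to rule out a first negative minimum of $\psi$ escaping through this endpoint (a computation yields $A(t)=\frac{\lambda_\rho^2\mathrm{e}^{-\lambda_\rho^2t}}{\cos\overline\theta}\bigl(\sin(\lambda_\rho\sin\overline\theta)-\lambda_\rho\sin\overline\theta\cos(\lambda_\rho\sin\overline\theta)\bigr)$, which is positive whenever $\overline\theta<\frac{\pi}{2}$). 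Once $\psi\ge0$ is established, so is the lemma; it is worth noting that ``$\psi\ge0$ at the endpoint'' is itself just the endpoint case $\frac{\overline\kappa}{\cos\overline\theta}\ge\lambda_\rho\tan(\lambda_\rho\sin\overline\theta)$ of the lemma, so it suffices to get $\psi\ge0$ in the open interior and then let $x\to x_1(t)$. If the boundary analysis proves awkward, a robust alternative is a direct avoidance-principle comparison of $\{\Gamma^\rho_t\}$ with the hairclip flow $\{\mathrm{H}^{\lambda_\rho}_t\}$ — they coincide at $t=\alpha_\rho$, so perturbing by a slightly earlier hairclip timeslice gives strict separation and hence $u(x,t)\ge u_{\mathrm{H}}(x,t+t_\rho-\alpha_\rho)$, which is again equivalent to the claim, modulo care at $o$ where the two curves cross.
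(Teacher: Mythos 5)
Your strategy is, at heart, the same as the paper's: you compare $\frac{\kappa}{\cos\theta}$ against $\lambda_\rho\tan(\lambda_\rho y)$ by putting their difference (or here, $\psi:=\phi_t$, which is that difference times the positive factor $\lambda_\rho\mathrm{e}^{-\lambda_\rho^2 t}\cos(\lambda_\rho y)$) into a parabolic maximum-principle argument and checking the parabolic boundary. The interior linearisation $(1+u_x^2)\psi_t=\psi_{xx}-(\lambda_\rho^2+2u_xu_{xt})\psi$ is correct, and so is the identification of the two "free" pieces of the parabolic boundary. However, there are genuine gaps.

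First and most seriously, the Neumann endpoint is handled only at the level of a plan. You \emph{assert} that differentiating the orthogonality relation $x_1(t)u_x=u$ along the flow produces $\psi_x=A(t)+B(t)\psi+C(t)\psi^2$, and you \emph{assert} a formula for $A(t)$; neither is derived, the sign of $B$ is never examined, and the structure of the Hopf argument that would exploit this is only sketched. This is precisely the part of the lemma that requires real work (it is where the paper spends most of its proof, via the identity $w_s=\mu\tan(\mu y)-\mu^2 y\ge 0$ at the Neumann end, contradicting Hopf), so leaving it as a plan leaves the proof incomplete. Note also that $\kappa_s=\kappa$ is \emph{not} the same relation as $x_1u_x=u$; the latter is the raw orthogonality condition and the former is its time derivative, so you are differentiating the wrong object when you say "that is".

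Second, there is a logical obstruction that the paper resolves with a trick you have not reproduced. Since your initial curve is exactly a hairclip timeslice, $\psi\equiv0$ at $t=\alpha_\rho$. Any "first negative minimum escapes through the Neumann endpoint" argument needs $\psi$ to be strictly positive in the interior at the critical time, and with $\psi\equiv0$ initially one cannot extract that strict positivity without further work. The paper avoids this by running the maximum principle for the auxiliary quantity $w_\mu:=\frac{\kappa}{\cos\theta}-\mu\tan(\mu y)$ with $\mu<\lambda_\rho$, which is \emph{strictly} positive on the initial curve away from $o$ and has $w_s=(\lambda_\rho^2-\mu^2)\sin\theta>0$ at $o$; the lemma then follows by letting $\mu\nearrow\lambda_\rho$. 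Your argument as stated lacks an analogous starting foothold.

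Third, the proposed fallback via avoidance does not give what you need. The avoidance principle, applied to $\Gamma^\rho_t$ and the hairclip with a shifted time origin, produces the height comparison $u(x,t)\ge u_{\mathrm H}(x,t+t_\rho-\alpha_\rho)$ for $t\ge\alpha_\rho$. But the lemma is a \emph{speed} estimate $u_t\ge h(u)$ at each fixed time, not a height comparison. The two curves coincide at $t=\alpha_\rho$, so the height comparison yields the one-sided inequality $u_t\ge h(u)$ only at $t=\alpha_\rho$, where it is trivial; at later times $\Gamma^\rho_t$ is no longer a hairclip slice and the comparison cannot be restarted. So this alternative, as stated, is not equivalent to the claim.
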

\begin{proof}
Note that equality holds on the initial curve $\Gamma^\rho=\mathrm{H}^{\lambda_\rho}_{t_\rho}\cap D$. Thus, given any $\mu<\lambda_\rho$, the function
\[
w:=\frac{\kappa}{\cos\theta}-\mu\tan(\mu y)
\]
is strictly positive on the initial curve $\Gamma^\rho$, except at the left endpoint, where it vanishes. Observe that
\[
w_s= \frac{\kappa_s}{\cos\theta}+\sin\theta\left(\frac{\kappa^2}{\cos^2\theta}-\mu^2\sec^2(\mu y)\right)\,.
\]
In particular, at the left endpoint on the initial curve,
\begin{align*}
w_s={}&\frac{\kappa_s}{\cos\theta}-\mu^2\sin\theta=(\lambda_\rho^2-\mu^2)\sin\theta>0\,.
\end{align*}
Thus (since $w_s$ is continuous at $o$ at time zero), if $w$ fails to remain non-negative at positive times, then this failure must occur immediately following some \emph{interior} time $t_\ast>0$. There are three possibilities: 1. $w_s(\cdot,t_\ast)=0$ at the left endpoint, 2. $w(\cdot,t_\ast)=0$ at the right endpoint; or 3. $w(\cdot,t_\ast)=0$ at some interior point, $p_\ast$. 

The first of the three possibilities is immediately ruled out by the Hopf boundary point lemma.

In the second case, the Hopf boundary point lemma and the Neumann boundary condition yield, at the right endpoint,
\begin{align*}
0>w_s
={}&\frac{\kappa}{\cos\theta}+\sin\theta\left(\frac{\kappa^2}{\cos^2\theta}-\mu^2(1+\tan^2(\mu y))\right)=\mu\tan(\mu y)-\mu^2y\ge0\,,
\end{align*}
which is absurd.

In the final case (having ruled out the first two), $w$ must attain a negative interior minumum just following time $t_\ast$. But at such a point, $w<0$, $w_s=0$ and
\begin{align*}
0\ge{}& (\partial_t-\Delta)w\\
={}&\frac{(\partial_t-\Delta)\kappa}{\cos\theta}-\frac{\kappa(\partial_t-\Delta)\cos\theta}{\cos^2\theta}+2\left(\frac{\kappa}{\cos\theta}\right)_s\frac{(\cos\theta)_s}{\cos\theta}-(\partial_t-\Delta)(\mu\tan(\mu y))\,.
\end{align*}
Since
\[
(\partial_t-\Delta)\kappa=\kappa^3\,,\;\;(\partial_t-\Delta)\cos\theta=\kappa^2\cos\theta\;\;\text{and}\;\;(\partial_t-\Delta)y=0\,,
\]
we conclude that
\begin{align*}
0\ge{}&-2\left(\frac{\kappa}{\cos\theta}\right)_s\kappa\tan\theta+2\mu\tan(\mu y)(\mu\tan(\mu y))_s\sin\theta\\
={}&2(\mu\tan(\mu y))_s\sin\theta\left(\mu\tan(\mu y)-\frac{\kappa}{\cos\theta}\right)\\
={}&2\mu^2\sec^2(\mu y)\left(\mu\tan(\mu y)-\frac{\kappa}{\cos\theta}\right)\\
>{}&0\,,
\end{align*}
which is absurd. 

Having ruled out each of the three possibilities, we conclude that $w\ge 0$ for any $\mu<\lambda_\rho$. The claim follows.
\end{proof}

In the limit as $\rho\to 0$, we then obtain
\begin{equation}\label{eq:sharp speed lower bound}
\frac{\kappa}{\cos\theta}\ge \lambda_0\tan(\lambda_0y)
\end{equation}
on the ancient solution, where $\lambda_0=\lim_{\rho\to 0}\lambda_\rho$. 

We now find that, as a graph over the $x$-axis (for $t$ sufficiently negative),
\[
(\sin(\lambda_0y))_t=\lambda\cos(\lambda_0 y)y_t=\lambda_0\cos(\lambda_0 y)\sqrt{1+y_x^2}\kappa=\lambda_0\cos(\lambda_0 y)\frac{\kappa}{\cos\theta}\ge\lambda_0^2\sin(\lambda y)\,,
\]
and hence
\[
\left(\mathrm{e}^{-\lambda_0^2t}\sin(\lambda_0 y)\right)_t\ge 0\,,
\]
which implies that the limit
\[
A(x):=\lim_{t\to-\infty}\mathrm{e}^{-\lambda_0^2t}y(x,t)
\]
exists in $[0,\infty)$ for each $x\in(-d,1)$. 

Recall that $\theta^-(t)\sim \mathrm{e}^{\frac{t}{a+1}}$ for $t\sim -\infty$. In particular, $\overline\theta(t)\le\theta^-(t)$ 
is integrable. We will exploit this fact to show that the limit $A(x)$ is positive (at least near $x=1$). First, we shall show that $\overline\kappa$ is integrable.

\begin{lem}\label{lem:curvature is integrable}
There exist $\rho_0>0$, $T>-\infty$, $C<\infty$ and $\delta>0$ such that
\[
\kappa\le C\mathrm{e}^{\delta t}\;\;\text{for}\;\; t\le T
\]
on each old-but-not-ancient solution $\{\Gamma^\rho_t\}_{t\in[\alpha_\rho,\omega_d)}$ with $\rho<\rho_0$.
\end{lem}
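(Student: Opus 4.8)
The plan is to control $\kappa$ by means of the ``hairclip quantity'' $v:=\kappa/\sin\theta$. On the initial curve $\Gamma^\rho=\mathrm{H}^{\lambda_\rho}_{t_\rho}\cap D$ one has $v=\lambda_\rho\tanh(\lambda_\rho(x+d))<\lambda_\rho$, and since $\lambda_\rho\to\lambda_0$ as $\rho\to 0$ with $\lambda_0<1$ (it being the root of $\tanh(\lambda(1+d))=\lambda$), we may fix $\rho_0>0$ so that $v<2$ on $\Gamma^\rho$ whenever $\rho<\rho_0$. I would then show that the bound $v<2$ propagates forward along the flow on a time interval $[\alpha_\rho,T]$ with $T<0$ independent of $\rho$, and finally read off
\[
\kappa=v\sin\theta<2\sin\overline\theta\le 2\overline\theta\le 2\theta^-\,.
\]
Since $\theta^-(t)\sim 2^{a/(1+a)}\mathrm{e}^{t/(1+a)}$ as $t\to-\infty$ (by the remark following Proposition~\ref{prop:barriers}), choosing $T$ negative enough that $\theta^-(t)\le 2\cdot 2^{a/(1+a)}\mathrm{e}^{t/(1+a)}$ for all $t\le T$ then yields the asserted estimate with $\delta=\tfrac1{1+a}$ and $C=4\cdot 2^{a/(1+a)}$.

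The key computation for the propagation step is the evolution equation
\[
(\partial_t-\Delta)v=2\cos\theta\,v\,v_s\,,
\]
which I would derive from $(\partial_t-\Delta)\kappa=\kappa^3$, $(\partial_t-\Delta)\sin\theta=\kappa^2\sin\theta$ and $v_s=(\kappa_s-\kappa^2\cot\theta)\csc\theta$ by a short manipulation. Crucially this equation has no zeroth order term and admits every constant as a stationary solution, so by the maximum principle $v$ attains its maximum over $[\alpha_\rho,t]$ either at the initial time --- where $v<2$ --- or at one of the two endpoints of $\Gamma^\rho_t$. At the Dirichlet endpoint $o$ we have $v=\kappa/\sin\theta=0<2$ for all time, so it only remains to rule out $v$ reaching $2$ at the moving Neumann endpoint. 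There, differentiating the Neumann condition $\kappa_s=\kappa$ gives the boundary identity
\[
v_s=v\bigl(1-v\cos\theta\bigr)\qquad\text{on }\partial D\,.
\]
Now fix $T<0$ with $\theta^-(T)<\tfrac\pi3$; then by \eqref{eq:gradient upper bound} and the monotonicity of $\theta^-$ we have $\overline\theta(t)\le\theta^-(t)<\tfrac\pi3$ for $t\in[\alpha_\rho,T]$, so $\cos\theta>\tfrac12$ everywhere on $\Gamma^\rho_t$ for such $t$. Hence if $v$ first reaches the value $2$ at the Neumann endpoint at some time $t_\ast\le T$ (remaining $<2$ in the interior), the Hopf boundary point lemma forces $v_s>0$ there, whereas the identity gives $v_s=2(1-2\cos\theta)<0$, a contradiction. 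This establishes $v<2$ on $\Gamma^\rho_t$ for all $t\in[\alpha_\rho,T]$ and $\rho<\rho_0$.

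The step I expect to be the main obstacle is exactly this treatment of the Neumann endpoint. A naive comparison argument fails because no constant satisfies the nonlinear Robin condition $v_s=v(1-v\cos\theta)$ induced on $v$ at $\partial D$, so a priori the spatial maximum of $v$ might escape through the free boundary. What makes the argument work is that, once $t$ is sufficiently negative, the turning angle along $\Gamma^\rho_t$ is so small that the identity forces $v_s$ at the endpoint to be strictly negative as soon as $v$ approaches the barrier level $2$, which is incompatible with Hopf.

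One further technical point deserves mention: $v=\kappa/\sin\theta$ must be controlled near $o$, where $\sin\theta$ could in principle degenerate. This is harmless, since the turning angle at $o$ evolves by $\partial_t\theta|_o=\kappa_s|_o>0$ (using $\kappa(o)=0$ and the preservation of $\kappa_s>0$ established in Lemma~\ref{lem:old solutions}), hence remains above its positive initial value $\arctan(\mathrm{e}^{\lambda_\rho^2 t_\rho})$; thus $v$ is bounded and smooth on $[0,1]\times(\alpha_\rho,T]$, one runs the maximum principle there, and passes to the limit $t\searrow\alpha_\rho$ using continuity together with the explicit formula for $v$ on $\Gamma^\rho$. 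If one wishes the statement to be non-vacuous for every $\rho<\rho_0$, one may additionally require $\rho_0$ small enough that $\alpha_{\rho_0}\le T$.
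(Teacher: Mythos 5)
Your proposal is correct and takes essentially the same approach as the paper: both track the quantity $v=\kappa/\sin\theta$, use the drift equation $(\partial_t-\Delta)v=2\cos\theta\,v\,v_s$ (no zeroth order term) together with $v=0$ at $o$ and the Robin-type boundary identity $v_s=v(1-v\cos\theta)$ at $\partial D$, invoke \eqref{eq:gradient upper bound} to ensure $\cos\overline\theta>\tfrac12$ for $t\le T$, and conclude via Hopf that $v\le 2$, whence $\kappa\le 2\sin\overline\theta\le 2\sin\theta^-\lesssim\mathrm{e}^{t/(1+a)}$. Your additional remark on controlling $v$ near $o$ by observing $\partial_t\theta|_o=\kappa_s|_o>0$ is a sound elaboration of a point the paper leaves implicit.
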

\begin{proof}
Since
\[
(\partial_t-\Delta)\sin\theta=\kappa^2\sin\theta\,,
\]
we find that
\[
(\partial_t-\Delta)\frac{\kappa}{\sin\theta}=2\nabla\frac{\kappa}{\sin\theta}\cdot\frac{\nabla\sin\theta}{\sin\theta}\,.
\]
So the maximum principle guarantees that the maximum of $\frac{\kappa}{\sin\theta}$ occurs at the parabolic boundary. Now, at the left boundary point, $\frac{\kappa}{\sin\theta}=0$, while at the right,
\begin{align*}
\left(\frac{\kappa}{\sin\theta}\right)_s={}&\frac{\kappa}{\sin\theta}\left(\frac{\kappa_s}{\kappa}-\frac{\cos\theta\kappa}{\sin\theta}\right)=\frac{\overline\kappa}{\sin\overline\theta}\left(1-\frac{\overline\kappa}{\tan\overline\theta}\right)\,.
\end{align*}
By \eqref{eq:gradient upper bound}, we can find $T>-\infty$ (independent of $\rho$) so that $\cos\overline\theta(t)\ge \frac{1}{2}$ for all $t\le T$. We thereby conclude that
\[
\frac{\kappa}{\sin\theta}\le\max\left\{2,\max_{t=\alpha_\rho}\frac{\kappa}{\sin\theta}\right\}
\]
for all $t\le T$. Since $\max_{t=\alpha_\rho}\frac{\kappa}{\sin\theta}\le \lambda_\rho\tanh(\lambda_\rho(1+d))\to\lambda_0^2<1$ as $\rho\to 0$, we find that
\begin{equation}\label{eq:C2 estimate constructed solution}
\overline\kappa\le 2\sin\overline\theta\le 2\sin\theta^-
\end{equation}
for all $\rho$ sufficiently small. The claim follows since $\theta^-$ is comparable to $2^{1+\frac{a}{1+a}}\mathrm{e}^{\frac{t}{1+a}}$ as $t\to-\infty$.
\end{proof}

\begin{cor}\label{cor:sharp speed upper bound}
There exist $T>-\infty$, $C<\infty$ and $\delta>0$ such that
\[
\frac{\kappa}{y}\le \lambda_0^2+C\mathrm{e}^{\delta t}\;\;\text{for}\;\; t<T
\]
on the ancient solution.
\end{cor}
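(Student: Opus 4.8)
The plan is to combine the lower bound \eqref{eq:sharp speed lower bound} on $\kappa/\cos\theta$, the integrable curvature decay of Lemma \ref{lem:curvature is integrable}, and a differential inequality for $\kappa/y$ analogous to (but reversed from) the one driving Lemma \ref{lem:sharp speed lower bound}. First I would note that the quantity $v:=\kappa/y$ is well-defined and smooth away from the right endpoint (near $o$ one uses $\kappa(o)=0$ and the odd reflection), and that on the initial hairclip slice $\Gamma^\rho=\mathrm{H}^{\lambda_\rho}_{t_\rho}\cap D$ one has the clean identity $\kappa/\cos\theta=\lambda_\rho\tan(\lambda_\rho y)$, hence $v=\kappa/y=\lambda_\rho\frac{\tan(\lambda_\rho y)}{y}\cos\theta\le\lambda_\rho\frac{\tan(\lambda_\rho y)}{y}$, which is bounded by $\lambda_\rho^2\sec^2(\lambda_\rho y)$; since $y\le\sin\overline\theta$ is already exponentially small for $t\le T$ (by \eqref{eq:C2 estimate constructed solution}), this is $\lambda_\rho^2+O(\mathrm{e}^{\delta t})$ at the initial time. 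The real content is to propagate such a bound forward under the flow.

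The key step is the evolution computation. Using $(\partial_t-\Delta)\kappa=\kappa^3$, $(\partial_t-\Delta)y=0$, and $\nabla y=\sin\theta$ (so $|\nabla y|\le 1$), one gets
\[
(\partial_t-\Delta)\left(\frac{\kappa}{y}\right)=\frac{\kappa^3}{y}+\frac{2}{y}\,\nabla\!\left(\frac{\kappa}{y}\right)\cdot\nabla y
=\kappa^2\,\frac{\kappa}{y}+\frac{2\sin\theta}{y}\,\partial_s\!\left(\frac{\kappa}{y}\right).
\]
The reaction term $\kappa^2(\kappa/y)$ is a \emph{good} sign for an upper barrier only if we subtract something; the standard trick is to compare $v$ with $\lambda_0^2+\phi(t)$ for a suitable function $\phi$ solving an ODE dominating the reaction. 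Since $\lambda_0^2<1$ and $\kappa\le C\mathrm{e}^{\delta t}\to 0$ by Lemma \ref{lem:curvature is integrable}, the coefficient $\kappa^2$ in the reaction term is integrable and tends to zero; thus a function of the form $\phi(t)=C'\mathrm{e}^{\delta t}$ will satisfy $\phi'\ge\kappa^2(\lambda_0^2+\phi)$ for $t$ sufficiently negative provided $\delta$ is chosen smaller than the decay rate available and $C'$ large. I would then run the maximum principle on $w:=v-\lambda_0^2-\phi(t)$: it is negative at $t=T_0$ for $T_0$ sufficiently negative, so a first violation would occur at an interior point (where $w=0$, $w_s=0$, $(\partial_t-\Delta)w\ge0$ fails by the reaction estimate), at the left endpoint (ruled out since $v\to\lambda_0^2$ there, using $\kappa_s=\lambda_0^2\sin\theta$-type behaviour at $o$ and the Hopf lemma on the doubled curve), or at the right endpoint. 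The right endpoint case is handled by the Neumann condition $\kappa_s=\kappa$ together with the geometric inequality $\kappa/\cos\theta\ge\lambda_0\tan(\lambda_0 y)$ from \eqref{eq:sharp speed lower bound}: at the boundary, $v_s=\frac{\kappa}{y}(1-\frac{\cos\theta\,\kappa}{y\,\sin\theta}\cdot\frac{y}{\kappa}\cdot\kappa)$-type computation shows $v_s<0$ whenever $v\ge\lambda_0^2$ there for $t$ small, contradicting Hopf, exactly as in the right-endpoint argument of Lemma \ref{lem:sharp speed lower bound}.

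The main obstacle I anticipate is the endpoint analysis rather than the interior maximum principle. At the left (Dirichlet) endpoint, $v=\kappa/y$ is a $0/0$ ratio, so establishing both that $v$ extends continuously with value $\lambda_0^2$ and that the relevant spatial derivative has the right sign requires care; the clean route is to pass through the odd doubling $\check\Gamma^\rho_t$ (as in Lemma \ref{lem:old solutions}), on which $\kappa$ is odd and $y$ is odd about $o$, so $v$ is even and smooth across $o$, whence one can apply the interior maximum principle there and need only control the genuine boundary point at $\partial D$. At $\partial D$ the subtlety is that one must insert the lower bound \eqref{eq:sharp speed lower bound} at just the right place to convert the Neumann identity into a strict sign; this is the same mechanism as the second case in the proof of Lemma \ref{lem:sharp speed lower bound}, run in reverse. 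Finally, taking $\rho\to0$ (so $\lambda_\rho\to\lambda_0$) and using that all constants can be chosen uniformly in $\rho<\rho_0$ (as in Lemma \ref{lem:curvature is integrable}) passes the estimate to the ancient solution, giving $\kappa/y\le\lambda_0^2+C\mathrm{e}^{\delta t}$ for $t<T$.
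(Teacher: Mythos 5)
Your mechanism is the right one --- use the integrable curvature decay from Lemma \ref{lem:curvature is integrable} to build a time-dependent barrier $\phi(t)$ whose ODE dominates the reaction, then run a maximum principle --- and it shares the essential structure with the paper, but the paper monitors a different quantity that sidesteps your two main technical headaches. Instead of the quotient $v=\kappa/y$, the paper works with the \emph{linear} combination $Q:=\kappa-(\lambda_\rho^2+\eta_\rho(t))\,y$, where $\eta_\rho(t):=\lambda_\rho^2\bigl(\exp(\tfrac{C^2}{2\delta}\mathrm{e}^{2\delta t})-1\bigr)$ is chosen so that $\eta_\rho'/(\lambda_\rho^2+\eta_\rho)=C^2\mathrm{e}^{2\delta t}\ge\kappa^2$. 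Then $(\partial_t-\Delta)Q=\kappa^3-\eta_\rho'y\le C^2\mathrm{e}^{2\delta t}\,Q$ with no singular first-order term, and the boundary conditions are clean: Dirichlet $Q=0$ at $o$ (both $\kappa$ and $y$ vanish there) and Robin $Q_s=Q$ at $\partial D$ (from $\kappa_s=\kappa$ and $y_s=y$). That removes both the $0/0$ at $o$ (so no odd doubling is required) and the unbounded coefficient $2y_s/y$ in your evolution equation. Two corrections to your argument. First, your formula for $v_s$ at the right endpoint is not right: there one has $\kappa_s=\kappa$ and $y_s=\sin\theta=y$, so $v_s=(\kappa_s y-\kappa y_s)/y^2=0$ identically; thus $v$ satisfies a \emph{homogeneous Neumann} condition at $\partial D$, which is cleaner than the inequality you tried to extract from \eqref{eq:sharp speed lower bound}, and the Hopf lemma immediately forbids a first boundary maximum. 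Second, you should carry $\lambda_\rho$ (not $\lambda_0$) through the barrier on each old-but-not-ancient solution and pass $\rho\to 0$ only at the very end, as the paper does; otherwise the initial comparison on $\Gamma^\rho$ forces you to estimate $|\lambda_\rho^2-\lambda_0^2|$ against $\mathrm{e}^{\delta\alpha_\rho}$, which is an unnecessary complication. With those adjustments your quotient route is a legitimate alternative; what the paper's linear combination buys is the absence of any singular terms and doubling, while what your quotient buys is the Neumann rather than Robin condition at $\partial D$.
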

\begin{proof}
Given $\rho<\rho_0$, set
\[
\eta_\rho(t):=\lambda_\rho^2\left(\exp\big(\tfrac{C^2}{2\delta}\mathrm{e}^{2\delta t}\big)-1\right)\,,
\]
where $\rho_0$, $C$ and $\delta$ are the constants from Lemma \ref{lem:curvature is integrable}, so that
\[
\frac{\eta_\rho'}{\lambda_\rho^2+\eta_\rho}=C^2\mathrm{e}^{2\delta t}
\]
and hence, for $t<T$,
\begin{align*}
(\partial_t-\Delta)\big(\kappa-(\lambda_\rho^2+\eta_\rho)y\big)={}&\kappa^3-\eta_\rho'y\\
\le{}&C^2\mathrm{e}^{2\delta t}\kappa-\frac{\eta_\rho'}{\lambda_\rho^2+\eta_\rho}(\lambda_\rho^2+\eta_\rho)y\\
={}&C^2\mathrm{e}^{2\delta t}\big(\kappa-(\lambda_\rho^2+\eta_\rho)y\big)\,.
\end{align*}
Since $\kappa-(\lambda_\rho^2+\eta_\rho)y=0$ at the left endpoint and $(\kappa-(\lambda_\rho^2+\eta_\rho)y)_s=\kappa-(\lambda_\rho^2+\eta_\rho)y$ at the right endpoint, we find that
\[
\kappa-(\lambda_\rho^2+\eta_\rho)y\le \exp\left(\frac{C^2}{2\delta}\mathrm{e}^{2\delta t}\right)\big(\kappa-(\lambda_\rho^2+\eta_\rho)y\big)\Big|_{t=\alpha_\rho}
\]
on each of the old-but-not-ancient solutions with $\rho<\rho_0$, and hence, taking $\rho\to0$,
\[
\kappa\le (\lambda_0^2+\eta_0)y
\]
on the ancient solution. The claim follows since, by the mean value theorem, we may estimate $\eta_0\le \frac{\lambda_0^2C^4}{4\delta^2}\mathrm{e}^{2\delta t}$ for $t<0$.
\end{proof}

By the estimate \eqref{eq:gradient upper bound} and Corollary \ref{cor:sharp speed upper bound}, we can find $T>-\infty$, $C<\infty$ and $\delta>0$ such that our ancient solution satisfies
\[
(\log \overline y)_t=\frac{\overline \kappa}{\overline y\cos\overline\theta}\le \frac{1}{\sqrt{1-4C^2\mathrm{e}^{2\delta t}}}\frac{\overline \kappa}{\overline y}\le (1+8C^2\mathrm{e}^{2\delta t})\frac{\overline \kappa}{\overline y}
\le \lambda_0^2+C^4\mathrm{e}^{2\delta t}
\]
for $t<T$. Integrating from time $t<T$ to time $T$ and rearranging then yields
\[
\overline y\ge B\mathrm{e}^{\lambda_0^2t}\,,\;\; B>0\,.
\]
Since the gradient of the solution is bounded by $\tan\overline\theta \le C\mathrm{e}^{\lambda_0^2t}$ for $t\le T$, this guarantees that the limit $A(x):=\mathrm{e}^{-\lambda_0^2t}y(x,t)$ is positive for all $x>x_0$ where $x_0<1$.


\section{Uniqueness}

\subsection{Unique asymptotics}

Consider now any convex ancient Dirichlet--Neumann curve shortening flow $\{\Gamma_t\}_{t\in(-\infty,\omega)}$ with Dirichlet endpoint $o\in \overline D\setminus\{0\}$.

\begin{lem}
Up to a time-translation, a rotation about the origin, and a reflection about the $x$-axis, we may arrange that
\begin{itemize}
\item $o=(-d,0)$ for some $d\in(0,1]$,
\item $(0,1)\in\Gamma_0$,
\item $\Gamma_t$ lies in the upper half disc for all $t$, and
\item $\Gamma_t\to \{(x,0):x\in[-d,1]\}$ uniformly in the smooth topology as $t\to-\infty$.
\end{itemize}
\end{lem}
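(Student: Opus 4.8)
The plan is to establish each of the four claimed normalizations in turn, noting that the first three are straightforward consequences of the symmetries of the problem and the geometry of the disc, while the fourth — the precise asymptotic behaviour as $t\to-\infty$ — is the real content of the lemma and will require the classification machinery for ancient convex solutions.

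First I would dispose of the three easy items. Since $o\in\overline D\setminus\{0\}$, there is a unique diameter of $D$ through $o$; applying a rotation about the origin, we may assume this diameter is the $x$-axis and that $o=(-d,0)$ with $d\in(0,1]$ (the restriction $o\ne 0$ is what makes $d>0$; $d\le 1$ since $o\in\overline D$). Because the flow $\{\Gamma_t\}$ is convex and ancient with one endpoint fixed at $o$ and the other meeting $\partial D$ orthogonally, a reflection across the $x$-axis if necessary places (the bulk of) $\Gamma_t$ in the closed upper half disc $D_+$. That $\Gamma_t$ in fact lies in $D_+$ for \emph{all} $t$ should follow from the fact that the $x$-axis (i.e. the union of the two Dirichlet--Neumann geodesics through $o$) is itself a stationary solution of the flow meeting the same boundary conditions, so the strong maximum principle (avoidance) forbids $\Gamma_t$ from crossing it; here one uses that a convex arc with both endpoints on the $x$-axis (one at $o$, one on $\partial D$) and meeting $\partial D$ orthogonally lies on one side. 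Finally, after a time-translation we may normalize the ``size'' of the solution — and the natural normalization, given that the flow is convex and ancient and hence (by the barrier $\{\check C_{\theta^+(t)}\}$ and the arguments of Section 3) sweeps out the upper half disc as $t\to 0^-$, forcing it to pass through the topmost point $(0,1)$ of $\partial D$ — is to arrange $(0,1)\in\Gamma_0$.

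The substantive claim is the smooth convergence $\Gamma_t\to[-d,1]\times\{0\}$ as $t\to-\infty$. Here the argument is: the turning angle $\overline\theta(t)$ is monotone (since $\overline\theta_t=\overline\kappa\ge0$ along a convex flow with the Neumann condition) and bounded, hence converges as $t\to-\infty$; a convex ancient solution that does not shrink to a point backwards in time must have $\overline\theta(t)\to 0$, for otherwise the gradient estimate \eqref{eq:gradient lower bound} together with the lower barrier $C_{\theta^-(t)}$ (which forces a definite amount of curvature, hence area decrease, over any backward time interval on which $\overline\theta$ stays bounded below) would be incompatible with the flow existing for all negative time while staying in the bounded region $D_+$. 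Once $\overline\theta(t)\to 0$, the solution is eventually a graph over the $x$-axis with gradient $\le\tan\overline\theta(t)\to 0$, parabolic regularity (applied to the graphical equation as in Lemma \ref{lem:old solutions}) upgrades this to smooth convergence, and the only limit consistent with the fixed endpoint $o=(-d,0)$ and a Neumann endpoint on $\partial D$ is the unstable critical arc $[-d,1]\times\{0\}$ — the minimizing arc $[-1,-d]\times\{0\}$ is ruled out because $\Gamma_t\subset D_+$ stays on the correct side and carries a Neumann endpoint on $\partial D\cap D_+$ that limits to $(1,0)$, not $(-1,0)$.

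The main obstacle is the dichotomy in the third bullet: showing that a non-flat convex ancient solution cannot contract to a point as $t\to-\infty$ (which would spoil the normalization and the asymptotics), and, relatedly, pinning down that the backward limit angle is exactly $0$ rather than some intermediate value. I expect to handle this by combining the lower barriers from Proposition \ref{prop:barriers} — time-translated so as to contract to $o$ at an arbitrarily negative time — with the containment $\Gamma_t\subset D_+$: if $\overline\theta$ did not tend to $0$, one could place such a contracting subsolution inside $\Gamma_t$ at a very negative time and derive a contradiction with the flow's eternal past existence, exactly paralleling the reasoning used to locate $\alpha_\rho$ in the proof of Lemma \ref{lem:old solutions}. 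The remaining steps are then routine applications of the maximum principle, the barriers already constructed, and parabolic regularity.
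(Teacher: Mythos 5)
There is a genuine gap, concentrated in the two substantive bullets (the containment $\Gamma_t\subset D_+$ and the backward convergence), and you've chosen a route for each that doesn't quite work.

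For the containment, you argue via avoidance against the stationary $x$-axis, resting on the claim that ``a convex arc with both endpoints on the $x$-axis (one at $o$, one on $\partial D$) and meeting $\partial D$ orthogonally lies on one side.'' This is not obvious and probably false as stated: a convex embedded arc with both endpoints on a line can still cross it (the height function $y(s)$ satisfies $y'=\sin\theta$ with $\theta$ monotone, which permits $y$ to change sign if the total turning is large enough), and the avoidance principle itself needs care here because the two curves meet at the common Dirichlet endpoint $o$. The paper avoids this entirely: it first shows the \emph{right endpoint} stays in the open upper half-disc for $t<0$ (if $\overline\theta(t_\ast)=0$ at some $t_\ast<0$ then convexity and the boundary conditions force $\Gamma_{t_\ast}$ to be the flat unstable arc, contradicting non-flatness), then establishes the backward limit, and only \emph{then} deduces containment from the monotonicity of the swept regions $\Omega_t$ (since $\Omega_t\subset\Omega=D_+$, one gets $\Gamma_t\subset\overline{D_+}$).

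For the backward convergence, your argument (``the gradient estimate together with the lower barrier $C_{\theta^-(t)}$ forces a definite amount of curvature, hence area decrease, \dots incompatible with the flow existing for all negative time'') is where the missing idea lives. The mechanism the paper actually uses is a clean first-variation-of-area computation: with $\Omega_t$ the region swept between $\Gamma_t$ and the fixed segment $[-1,-d]\times\{0\}$, one has $\tfrac{d}{dt}\area(\Omega_t)=-\int_{\Gamma_t}\kappa\,ds=-(\overline\theta-\underline\theta)$, and since $\Omega=\bigcup_{t}\Omega_t\subset D$ has finite area, the integral $\int_{-\infty}^0(\overline\theta-\underline\theta)\,d\tau$ converges, forcing $\overline\theta-\underline\theta\to 0$ along a sequence $t_j\to-\infty$. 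The limiting curve is then a straight arc from $o$ meeting $\partial D$ orthogonally, which forces the limit tangent angle $\theta_{-\infty}$ to be $0$ or $\pi$; monotonicity of $\overline\theta$ rules out $\pi$. Your version does not dispose of the possibility $\overline\theta(t)\to\theta_{-\infty}\in(0,\pi)$ as $t\to-\infty$: bounding $\overline\theta$ from below does not bound the total turning $\overline\theta-\underline\theta$ (which is what controls the area loss) from below, and the barrier $C_{\theta^-(t)}$ is a \emph{subsolution} useful for forward-time comparison, not for producing a backward-in-time contradiction out of nothing. The area identity is the ingredient you need.

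The easier normalizations (rotation to put $o=(-d,0)$; reflection to put the right endpoint in the upper half; a time translation and the intermediate value theorem applied to the monotone quantity $\overline\theta$ to get $(0,1)\in\Gamma_0$) you have essentially right.
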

\begin{proof}
Up to a time translation, we may arrange that $\omega>0$. Up to rotation and a reflection, we may then arrange that $o=(-d,0)$, $d\in(0,1]$, and $(\cos\overline\theta(0),\sin\overline\theta(0))$ lies in the upper half-disc. This ensures that $(\cos\overline\theta(t),\sin\overline\theta(t))$ lies in the upper half-disc for all $t<0$. Indeed, if $\overline\theta(t_\ast)=0$ for some $t_\ast<0$, then convexity and the boundary conditions guarantee that $\Gamma_{t_\ast}=\{(x,0):x\in [-d,1]\}$; so $\{\Gamma_t\}_{t\in(-\infty,\omega)}$ is the stationary unstable critical arc.

Denote by $\Omega_t$ the region lying above $\Gamma_t\cup\{(0,x):x\in[-1,-d]\}$ and set $\Omega:=\cup_{t<\omega}\Omega_t$. The first variation formula for enclosed area yields
\[
\frac{d}{dt}\area(\Omega_t)=-\int_{\Gamma_t}\kappa\,ds=-(\overline\theta(t)-\underline\theta(t))
\]
and hence
\[
\area(\Omega_t)=\area(\Omega_0)+\int_t^0(\overline\theta(\tau)-\underline\theta(\tau))\,d\tau\,.
\]
Since $\area(\Omega)$ is finite, $\overline\theta-\underline\theta$ must converge to zero along some sequence of times $t_j\to-\infty$. Since $\overline\theta>0$, this ensures that $\Omega$ is the upper half-disc, and hence $\Gamma_t$ converges uniformly to the unstable critical arc as $t\to-\infty$. Parabolic regularity theory then guarantees smooth convergence.


Since the flow is monotone, $\Gamma_t$ must then lie in the upper half disc for all $t$. We have thus shown, when $d<1$, that $\omega=\infty$ and $\Gamma_t$ converges smoothly to the minimizing arc as $t\to\infty$ and, when $d=1$, that $\omega<\infty$ and $\Gamma_t$ converges uniformly to $o$ as $t\to \omega$. Up to a further time-translation, we may therefore arrange that the point $(0,1)$ lies in $\Gamma_0$. 
\end{proof}

\begin{lem}
For every $t\in(-\infty,\omega)$, $\kappa_s>0$.
\end{lem}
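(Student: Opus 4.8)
The plan is to show that positivity of $\kappa_s$ along $\Gamma_t$ is preserved by the flow for all $t\in(-\infty,\omega)$, by exhibiting $\kappa_s$ as a supersolution of a suitable linear parabolic equation with favourable boundary behaviour, and then deducing that $\kappa_s>0$ at \emph{all} times (not merely for large $t$) from the fact that the flow emanates from the unstable critical arc at $t=-\infty$. The relevant evolution equation is the one already derived in the proof of Lemma~\ref{lem:old solutions}: using the commutator $[\partial_t,\partial_s]=\kappa^2\partial_s$, the equation $(\partial_t-\Delta)\kappa=\kappa^3$, and the identity $\Delta\kappa=\kappa_t=0$ at the Dirichlet endpoint $o$ (where $\kappa\equiv 0$ for all $t$, being a stationary boundary condition), one finds
\[
(\partial_t-\Delta)\kappa_s=4\kappa^2\kappa_s\,,\qquad (\kappa_s)_s=0\;\;\text{at}\;\;o\,,\qquad \kappa_s=\kappa\;\;\text{at}\;\;\partial D\,,
\]
the last boundary identity coming from differentiating the Neumann condition $\kappa_s=\kappa$ (which holds at $\partial D$ by the orthogonality constraint). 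The key structural point is that this is a homogeneous linear equation for $\kappa_s$ with a Neumann-type condition at $o$ and a Robin-type condition at $\partial D$.

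The main steps are as follows. First, I would establish that $\kappa_s\ge 0$ on all of $\Gamma_t$ for every $t$: fix any $t_0\in(-\infty,\omega)$ and run the maximum principle on the interval $(-\infty,t_0]$; since the flow converges smoothly to the critical arc as $t\to-\infty$, on which $\kappa\equiv 0$ identically (so $\kappa_s\equiv 0$), for any $\varepsilon>0$ there is a time $t_\varepsilon$ before which $|\kappa_s|<\varepsilon$ uniformly, and one compares $\kappa_s$ against the subsolution $-\varepsilon e^{c(t-t_\varepsilon)}$ (with $c$ dominating $4\sup\kappa^2$ on the relevant compact time-space region), ruling out a first negative interior minimum by the equation, a negative minimum at $o$ by the Hopf lemma applied to $(\kappa_s)_s=0$, and a negative value at $\partial D$ by the Hopf lemma combined with the Robin condition $\kappa_s=\kappa>0$ there (positivity of $\kappa$ away from $o$ having already been recorded). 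Letting $\varepsilon\to 0$ gives $\kappa_s\ge 0$. Second, I would upgrade this to the strict inequality $\kappa_s>0$: since $\kappa_s\not\equiv 0$ (else $\kappa$ would be constant in $s$, hence zero by the Dirichlet condition, forcing $\Gamma_t$ to be the stationary critical arc, contrary to non-flatness), the strong maximum principle for the equation $(\partial_t-\Delta)\kappa_s=4\kappa^2\kappa_s$ forces $\kappa_s>0$ at every interior point and every time; at $\partial D$ strictness is immediate from $\kappa_s=\kappa>0$, and at $o$ one has $\kappa_s>0$ by the Hopf boundary point lemma (if $\kappa_s$ vanished at $o$ at some time $t_1$, then since $\kappa_s>0$ nearby and $(\kappa_s)_s=0$ at $o$, Hopf is contradicted).

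The step I expect to be the main obstacle is the first one: making the comparison argument rigorous near $t=-\infty$ requires knowing that $\kappa_s\to 0$ uniformly as $t\to -\infty$ \emph{together with} enough uniform control of $\kappa^2$ (the coefficient in the reaction term) on long backward time intervals, so that the exponential subsolution $-\varepsilon e^{c(t-t_\varepsilon)}$ genuinely bounds $\kappa_s$ from below on $[t_\varepsilon,t_0]$ without the constant $c$ degenerating. Both of these follow from the smooth convergence $\Gamma_t\to[-d,1]\times\{0\}$ established in the previous lemma (which gives $\kappa\to 0$ and $\kappa_s\to 0$ uniformly), so the obstacle is really just bookkeeping rather than a genuine difficulty; the remaining subtlety is the slightly nonstandard Robin condition $\kappa_s=\kappa$ at $\partial D$, but since $\kappa>0$ there this condition only \emph{helps}, pushing $\kappa_s$ away from zero rather than toward it.
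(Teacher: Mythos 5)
Your overall plan — deriving the linear parabolic equation $(\partial_t-\Delta)\kappa_s=4\kappa^2\kappa_s$ with the Neumann-type condition $(\kappa_s)_s=0$ at $o$ and the Robin identity $\kappa_s=\kappa$ at $\partial D$, and then running the maximum principle backwards from the smooth convergence to the critical arc — is exactly the structure the paper invokes (the paper simply notes $\kappa_s\ge 0$ at both endpoints and defers the maximum-principle argument to \cite[Lemma 3.3]{MR4668092}). Your treatment of the two boundary cases and the upgrade to strict positivity via the strong maximum principle and Hopf are all fine.

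However, the step you flag as ``really just bookkeeping'' is where the argument actually has a genuine gap. Your comparison yields, at the fixed time $t_0$,
\[
\kappa_s(\cdot,t_0)\ge -\varepsilon\,\mathrm{e}^{c(t_0-t_\varepsilon)}\,,
\]
where $c$ is a fixed bound for $4\kappa^2$ on $(-\infty,t_0]$ (this is indeed $\varepsilon$-independent, as you say). But as $\varepsilon\to 0$ one has $t_\varepsilon\to-\infty$, so $\mathrm{e}^{c(t_0-t_\varepsilon)}\to\infty$, and the product $\varepsilon\,\mathrm{e}^{-ct_\varepsilon}$ need not tend to zero. To close the argument you need a \emph{quantitative} decay rate for $\sup|\kappa_s|$ (equivalently, for $\kappa_s$ at time $t_\varepsilon$) that beats the exponent $c$, or else an integrability statement $\int_{-\infty}^{t_0}\sup_\Gamma\kappa^2\,dt<\infty$ so that the Gronwall factor stays bounded as $t_\varepsilon\to-\infty$. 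Smooth convergence of $\Gamma_t$ to the critical arc gives $\kappa,\kappa_s\to 0$ uniformly but carries no rate, so it does not by itself supply either ingredient. This is precisely the content that the cited \cite[Lemma 3.3]{MR4668092} has to establish (there one can extract the needed integrability from, e.g., the monotone decay of length, $\frac{d}{dt}L=-\int\kappa^2\,ds$, combined with parabolic interpolation to pass from $\int\int\kappa^2$ to $\int\sup\kappa^2$); you should make that step explicit rather than wave at it.

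One further small remark: you assert that positivity of $\kappa$ away from $o$ has ``already been recorded.'' It hasn't been, explicitly — the paper assumes the solution is convex and non-flat, and one has to run the strong maximum principle for $(\partial_t-\Delta)\kappa=\kappa^3$ together with the Hopf lemma at $\partial D$ (using the Robin condition $\kappa_s=\kappa$ there) to obtain $\kappa>0$ on $\Gamma_t\setminus\{o\}$. The argument is easy, but it should be stated, since you rely on it both for the boundary case (b) and for the strict positivity upgrade.
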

\begin{proof}
Since $\kappa_s\ge 0$ at both endpoints, the claim may be obtained by applying the maximum principle exactly as in \cite[Lemma 3.3]{MR4668092}.
\end{proof}

\begin{prop}
There exists $A\in[0,\infty)$ such that
\begin{equation}\label{eq:asymptotics}
y(x,t)=A\mathrm{e}^{\lambda_0^2t}(\sinh(\lambda(x+d))+o(1))
\end{equation}
uniformly as $t\to-\infty$. 
\end{prop}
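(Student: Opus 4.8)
The plan is to run the same linearized-asymptotics argument used for the constructed solution in the previous section, but now for an arbitrary convex ancient flow, extracting the exponential rate and the precise profile from a spectral decomposition of the linearized operator at the unstable critical arc. First I would represent the flow graphically over the $x$-axis: by the previous lemma, $\Gamma_t$ converges smoothly to $[-d,1]\times\{0\}$ as $t\to-\infty$, so for $t$ sufficiently negative it is a graph $y=y(x,t)$ over $x\in[-d,\overline x(t)]$ with $\overline x(t)\to 1$, small $C^\infty$ norm, and satisfying the Dirichlet condition $y(-d,t)=0$, the orthogonality condition at the free endpoint, and the graphical curve shortening equation $y_t=\tfrac{y_{xx}}{1+y_x^2}$. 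Linearizing, $v:=y$ satisfies $v_t=v_{xx}+E$ where $E=O(\|y\|_{C^2}\, |y_{xx}|)$ is quadratically small, on the (slightly moving) interval with Dirichlet condition at $x=-d$ and, to leading order, Neumann condition $v_x=0$ at $x=1$ (the orthogonality condition $y_x=\cot\overline\theta$ together with $\overline x\to1$ linearizes to $v_x(1,t)=0$ modulo higher-order terms). The eigenfunctions of $-\partial_{xx}$ on $[-d,1]$ with Dirichlet condition at $-d$ and Neumann condition at $1$ are $\sin(\mu_k(x+d))$ with $\mu_k(1+d)=(k-\tfrac12)\pi$... but this does not match $\sinh$, so in fact the correct linearization must retain the curvature boundary condition $\kappa_s=\kappa$ at $\partial D$, which in graphical terms contributes a zeroth-order boundary term; the relevant Robin-type eigenvalue problem is $-\phi''=\lambda^2\phi$ on... — rather, one should linearize the \emph{curvature} equation, for which we already know from the Preliminaries that $\kappa=0$ at $o$ and $\kappa_s=\kappa$ at $\partial D$, and the stationary solutions of the linearized curvature flow with these boundary conditions are spanned by $\sinh(\lambda(x+d))$ with $\tanh(\lambda(1+d))=\lambda$, i.e. exactly $\lambda=\lambda_0$. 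So the cleanest route is to work with $\kappa$ (equivalently $y$, since on a graph with small gradient $\kappa\approx y_{xx}$ and the map $y\mapsto\kappa$ is a small perturbation of $y\mapsto y_{xx}$) and observe that $\lambda_0^2$ is the principal eigenvalue of $-\partial_{ss}$ under the boundary conditions $(\,\cdot\,)=0$ at $o$, $(\,\cdot\,)_s=(\,\cdot\,)$ at $\partial D$, with positive principal eigenfunction $\sinh(\lambda_0(s))$ — matching the convexity/monotonicity positivity already established.

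The main steps, in order, are: (i) set up the graphical representation and the linearized equation for $y$ (or $\kappa$) with its boundary conditions as above, controlling the error term by the already-established decay $\overline\theta(t)\le\theta^-(t)\to0$ and $\overline\kappa\le C\mathrm{e}^{\delta t}$ (Lemma \ref{lem:curvature is integrable} applies verbatim once we re-derive its ingredients for the general ancient solution — the $a$ priori estimates of the Preliminaries and the barrier $\overline\theta\le\theta^-$ are all available); (ii) perform the spectral decomposition $y(\cdot,t)=\sum_k a_k(t)\phi_k$ against the $L^2$-orthonormal eigenbasis $\{\phi_k\}$ of the model operator, obtaining $a_k'=-\mu_k^2 a_k+\langle E,\phi_k\rangle$ with $\mu_0=\lambda_0$ and $\mu_k$ strictly increasing; (iii) a Merle–Zaag–type alternative (cf.\ \cite{MR4668092}): either the flat mode dominates, giving $y(\cdot,t)\sim a_0(t)\phi_0$ with $a_0(t)\sim A\mathrm{e}^{-\lambda_0^2 t}$, or a higher mode dominates, forcing faster decay, which one rules out by comparison with the lower barrier $C_{\theta^-}$ or directly via the known positivity of $\kappa_s$ (which prevents a sign-changing leading profile near $x=1$ and pins down the positive eigenfunction). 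Because the positive mode cannot vanish identically unless the flow is flat — excluded by hypothesis — but \emph{can} have zero amplitude while a higher mode is exponentially smaller... here one must be a little careful: the conclusion allows $A=0$ only in the degenerate reading; in fact one argues $A>0$ exactly as in the constructed case, using $\overline y\ge B\mathrm{e}^{\lambda_0^2 t}$ from the speed estimates. Finally, (iv) upgrade the $L^2$ asymptotics to uniform asymptotics in $x$ by parabolic regularity (smooth convergence of $\mathrm{e}^{-\lambda_0^2t}y(\cdot,t)$ to $A\sinh(\lambda_0(\cdot+d))$ along subsequences, with the limit identified by steps (ii)–(iii)), giving \eqref{eq:asymptotics}.

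The hard part will be step (iii): establishing the spectral gap argument rigorously when the interval of definition $[-d,\overline x(t)]$ is itself moving and the boundary conditions are only approximately the model ones, so that the eigenbasis is $t$-dependent and the "error" $E$ contains boundary contributions as well as the bulk quadratic term. The standard fix is to freeze the domain by a $t$-dependent diffeomorphism onto $[-d,1]$ (absorbing the correction into $E$, which remains exponentially small by the established rate estimates) and to use that $\overline x(t)\to1$ with rate controlled by $\overline\theta\le\theta^-$; then the Merle–Zaag ODE lemma applies to the frozen system. I expect this to be the one place where the write-up needs genuine care rather than routine estimation, and it is exactly the point where the cited argument of \cite[\S3]{MR4668092} serves as the template; the remaining positivity of $A$ and the uniform (rather than $L^2$) convergence follow, as above, from the barrier and regularity machinery already developed in this paper.
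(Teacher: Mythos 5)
Your plan is a genuinely different route from the paper, and it has concrete gaps. The paper does not linearize at all: its key observation is that the height $y$, viewed intrinsically over arclength $s$ from $o$, satisfies the \emph{exactly} linear problem $(\partial_t-\partial_s^2)y=0$ with the \emph{exact} boundary conditions $y=0$ at $o$ and $y_s=y$ at $\partial D$ (the Robin condition follows directly from orthogonality with the unit circle, since $y_s=\sin\overline\theta=y$ at the right endpoint). There is thus no error term $E$, no ``quadratically small'' correction, no freezing of a moving domain and no Merle--Zaag machinery needed: one just needs an a priori decay rate, and then the Alaoglu-plus-Fourier argument of \cite[Prop.~3.4]{MR4668092} pins down the profile $\sinh(\lambda_0(\cdot+d))$. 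Your instinct that the boundary condition is ``Robin-type'' is right, but you keep framing the heat equation as a perturbation ($\kappa\approx y_{xx}$, bulk error $E$, $t$-dependent domain), which misses the structural fact that makes the whole argument elementary.

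The second gap is how you propose to obtain the decay rate. You claim Lemma~\ref{lem:curvature is integrable} ``applies verbatim once we re-derive its ingredients for the general ancient solution,'' but that lemma's proof leans crucially on the specific hairclip initial data $\Gamma^\rho=\mathrm{H}^{\lambda_\rho}_{t_\rho}\cap D$ through the bound $\max_{t=\alpha_\rho}\tfrac{\kappa}{\sin\theta}\le\lambda_\rho\tanh(\lambda_\rho(1+d))<1$; for a general ancient solution there is no finite initial time at which to seed the maximum principle, and bounding $\kappa/\sin\theta$ as $t\to-\infty$ is essentially what one is trying to prove. The paper sidesteps this entirely with a short avoidance argument: the normalization $(0,1)\in\Gamma_0\cap\Gamma^\ast_0$ forces $\Gamma_t$ and the constructed solution $\Gamma^\ast_t$ to intersect for every $t<0$, whence $\underline\theta\le\overline\theta{}^\ast$, and feeding this into the a priori gradient bound \eqref{eq:gradient lower bound} together with the known asymptotics of $\Gamma^\ast_t$ yields $\overline y\lesssim \mathrm{e}^{\lambda_0^2 t}$ when $d<1$. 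Finally, your write-up never addresses $d=1$; there $b=0$ so \eqref{eq:gradient lower bound} degenerates, and the paper needs a separate geometric argument (touching from above by a circular barrier $C_{\overline\theta(t_0)}$ and the resulting relation \eqref{eq:intersection points for unique asymptotics when d=1}) to recover $\sin\overline\theta\lesssim \mathrm{e}^{\lambda_0^2 t}$. A Merle--Zaag approach on the exact intrinsic linear problem could plausibly be made to work and would have the virtue of not presupposing the constructed solution's asymptotics, but as written your proposal relies on a lemma that does not transfer and misses the exactness that makes the published proof short.
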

\begin{proof}
Denote by $\{\Gamma^\ast_t\}_{t\in(-\infty,\omega)}$ the constructed solution. Since $\Gamma_0$ and $\Gamma_0^\ast$ both contain the point $(0,1)$, the contrapositive of the avoidance principle guarantees that $\Gamma_t$ must intersect $\Gamma^\ast_t$ away from $o$ at every time $t<0$. It follows that the value of $\underline\theta$ on the second solution must at no time exceed the value of $\overline\theta{}^\ast$ on the constructed solution. But then, applying the gradient bound \eqref{eq:gradient lower bound} and estimating $\sin\overline\theta{}^\ast\le A\mathrm{e}^{\lambda_0^2t}+o(\mathrm{e}^{\lambda_0^2t})$ yields
\[
\frac{b}{1+a}\overline y\le \frac{b\sin\overline\theta}{1+a\cos\overline\theta}\le\tan\underline\theta\le 2\sin\underline\theta\le 2\sin\overline\theta{}^\ast\le 2(A\mathrm{e}^{\lambda_0^2t}+o(\mathrm{e}^{\lambda_0^2t}))
\]
as $t\to-\infty$, and hence, when $d<1$,
\begin{equation}\label{eq:unique height asymptotics}
\limsup_{t\to-\infty}\mathrm{e}^{\lambda_0^2t}\overline y(t)<\infty.
\end{equation}
Since the height function $y$ satisfies the (intrinsic) Dirichlet--Robin heat equation
\[
\left\{\begin{aligned}(\partial_t-\Delta)y={}&0\\ y=0\;\;\text{at}\;\; o\,, {}&\; y_s=y\;\;\text{at}\;\;(\cos\overline\theta,\sin\overline\theta)\,,\end{aligned}\right.
\]
we may apply Alaoglu's theorem and elementary Fourier analysis as in \cite[Proposition 3.4]{MR4668092} to obtain \eqref{eq:asymptotics}.

When $d=1$, we need to work a little harder to obtain \eqref{eq:unique height asymptotics}: at any time $t<0$, either $\overline y(t)\le \overline y{}^\ast(t)$, as desired, or $\overline y(t)>\overline y{}^\ast(t)$. In the latter case, the avoidance principle and the Dirichlet condition ensure that $y{}^\ast(\cdot,t)-y(\cdot,t)$ attains a positive maximum at an interior point. Since the Dirichlet--Neumann circular arc $C_{\overline \theta(t)}$ lies below $\Gamma_t$ (with common boundary), we can find some $t_0>t$ and $x_0\in(-1,\cos\overline\theta(t_0))$ such that the advanced arc $C_{\overline \theta(t_0)}$ touches $\Gamma_t^\ast$ from above at the interior point $(x_0,y^\ast(x_0,t))$, and hence
\begin{equation*}
y_{\overline\theta(t_0)}(x_0)=y^\ast(x_0,t)=:A_0\;\;\text{and}\;\; (y_{\overline\theta(t_0)})_x(x_0)= y^\ast_x(x_0,t)=:B_0\,,
\end{equation*}
where
\[
y_{\theta}(x)=r(\theta)-\sqrt{r^2(\theta)-(x+1)^2}\,,\;\; r(\theta):=\frac{1+\cos\theta}{\sin\theta}\,.
\]
That is,
\[
r_0-\sqrt{r^2_0-(x_0+1)^2}=A_0\;\;\text{and}\;\;\frac{x_0+1}{\sqrt{r^2_0-(x_0+1)^2}}=B_0\,,
\]
where $r_0:= r(\overline\theta(t_0))$. Rearranging, these become
\[
x_0+1=\frac{B_0r_0}{\sqrt{1+B_0^2}} \;\;\text{and}\;\; r_0=A_0+\frac{x_0+1}{B_0}=A_0+\frac{r_0}{\sqrt{1+B_0^2}}\,,
\]
which together imply that
\[
\left(\sqrt{1+B_0^2}-1\right)r_0=A_0\sqrt{1+B_0^2}=\frac{A_0}{x_0+1}B_0r_0\,.
\]
Eliminating $r_0$ and rearranging, we conclude that
\begin{equation}\label{eq:intersection points for unique asymptotics when d=1}
\frac{A_0}{(x_0+1)B_0}=\frac{1}{1+\sqrt{1+B_0^2}}\,.
\end{equation}
We claim that this is only possible (when $-t$ is sufficiently large) if $x_0$ is close to one. Indeed, the asymptotic linear analysis yields, for some $A\in(0,\infty)$,
\[
\left\{\begin{aligned}
A_0={}&y^\ast(x_0,t)=A\mathrm{e}^{\lambda_0^2t}\left(\sinh(\lambda_0(x_0+1))+o(1)\right)\\
B_0={}&y^\ast_x(x_0,t)=A\lambda_0\mathrm{e}^{\lambda_0^2t}\left(\cosh(\lambda_0(x_0+1))+o(1)\right)
\end{aligned}\right.\;\;\text{as}\;\; t\to-\infty\,.
\]
(Note that, recalling \eqref{eq:C2 estimate constructed solution}, we may estimate $y^\ast_{xx}\lesssim \overline\kappa{}^\ast\le 2\sin\overline\theta{}^\ast\le C\mathrm{e}^{\lambda_0^2t}$, which justifies the uniform $C^1$ convergence of $\mathrm{e}^{-\lambda_0^2t}y^\ast(\cdot,t)$.) Recalling \eqref{eq:intersection points for unique asymptotics when d=1}, we conclude that
\[
\frac{\tanh(\lambda_0(x_0+1))}{\lambda_0(x_0+1)}\to \frac{1}{2}\;\;\text{as}\;\; t\to-\infty\,.
\]
This implies that $x_0=1-o(1)$ as $t\to-\infty$ and hence, as $t\to-\infty$,
\[
\sin\overline\theta(t)\le\sin\overline\theta(t_0)=(1+\overline x(t_0))r_0^{-1}\sim (1+x_0)r_0^{-1}=\frac{B_0}{\sqrt{1+B_0^2}}\le B_0\sim \mathrm{e}^{\lambda_0^2t}
\]
as desired.
%
\end{proof}

\subsection{Uniqueness}

Uniqueness may now be established using the avoidance principle, as in \cite[Proposition 3.5]{MR4668092}. Combined with Theorem \ref{thm:existence} and the asymptotics \eqref{eq:asymptotics}, this completes the proof of Theorem \ref{thm:convex ancient solutions}.

\bibliographystyle{plain}
\bibliography{bibliography}

\begin{thebibliography}{10}

\bibitem{ALT}
Paul~T. Allen, Adam Layne, and Katharine Tsukahara.
\newblock The {D}irichlet problem for curve shortening flow.
\newblock Preprint,
  \href{https://arxiv.org/abs/1208.3510v2}{arXiv:1208.3510v2}.

\bibitem{MR2967056}
Ben Andrews.
\newblock Noncollapsing in mean-convex mean curvature flow.
\newblock {\em Geom. Topol.}, 16(3):1413--1418, 2012.

\bibitem{MR2843240}
Ben Andrews and Paul Bryan.
\newblock A comparison theorem for the isoperimetric profile under
  curve-shortening flow.
\newblock {\em Comm. Anal. Geom.}, 19(3):503--539, 2011.

\bibitem{AndrewsBryan}
Ben Andrews and Paul Bryan.
\newblock Curvature bound for curve shortening flow via distance comparison and
  a direct proof of {G}rayson's theorem.
\newblock {\em J. Reine Angew. Math.}, 653:179--187, 2011.

\bibitem{MR4668092}
Theodora Bourni and Mat Langford.
\newblock Classification of convex ancient free-boundary curve-shortening flows
  in the disc.
\newblock {\em Anal. PDE}, 16(9):2225--2240, 2023.

\bibitem{BLTcsf}
Theodora Bourni, Mat Langford, and Giuseppe Tinaglia.
\newblock Convex ancient solutions to curve shortening flow.
\newblock {\em Calc. Var. Partial Differential Equations}, 59(4):133, 2020.

\bibitem{Buckland}
John~A. Buckland.
\newblock Mean curvature flow with free boundary on smooth hypersurfaces.
\newblock {\em J. Reine Angew. Math.}, 586:71--90, 2005.

\bibitem{DHScsf}
Panagiota Daskalopoulos, Richard Hamilton, and Natasa Sesum.
\newblock Classification of compact ancient solutions to the curve shortening
  flow.
\newblock {\em J. Differential Geom.}, 84(3):455--464, 2010.

\bibitem{Edelen}
Nick Edelen.
\newblock The free-boundary {B}rakke flow.
\newblock {\em J. Reine Angew. Math.}, 758:95--137, 2020.

\bibitem{GageHamilton86}
M.~{Gage} and R.S. {Hamilton}.
\newblock {The heat equation shrinking convex plane curves.}
\newblock {\em {J. Differ. Geom.}}, 23:69--96, 1986.

\bibitem{Gage84}
M.~E. Gage.
\newblock Curve shortening makes convex curves circular.
\newblock {\em Invent. Math.}, 76(2):357--364, 1984.

\bibitem{Grayson}
Matthew~A. Grayson.
\newblock The heat equation shrinks embedded plane curves to round points.
\newblock {\em J. Differential Geom.}, 26(2):285--314, 1987.

\bibitem{Huisken89}
Gerhard Huisken.
\newblock Nonparametric mean curvature evolution with boundary conditions.
\newblock {\em J. Differential Equations}, 77(2):369--378, 1989.

\bibitem{Huisken96}
Gerhard Huisken.
\newblock A distance comparison principle for evolving curves.
\newblock {\em Asian J. Math.}, 2(1):127--133, 1998.

\bibitem{Ko}
Dongyeong Ko.
\newblock Existence and {M}orse index of two free boundary embedded geodesics
  on {R}iemannian 2-disks with convex boundary.
\newblock Preprint, \href{https://arxiv.org/abs/2309.09896}{arXiv:2309.09896}.

\bibitem{FBDistanceComparison}
Mat Langford and Jonathan~J. Zhu.
\newblock A distance comparison principle for curve shortening flow with free
  boundary.
\newblock Preprint, \href{https://arxiv.org/abs/2302.14258}{arXiv:2302.14258}.

\bibitem{Lieberman}
Gary~M. Lieberman.
\newblock {\em Second order parabolic differential equations}.
\newblock World Scientific Publishing Co., Inc., River Edge, NJ, 1996.

\bibitem{Stahl96b}
Axel Stahl.
\newblock Convergence of solutions to the mean curvature flow with a {N}eumann
  boundary condition.
\newblock {\em Calculus of Variations and Partial Differential Equations},
  4(5):421--441, 1996.

\bibitem{Stahl96a}
Axel Stahl.
\newblock Regularity estimates for solutions to the mean curvature flow with a
  {N}eumann boundary condition.
\newblock {\em Calculus of Variations and Partial Differential Equations},
  4(4):385--407, 1996.

\end{thebibliography}

\end{document}